\DeclareMathOperator{\Aut}{Aut}
\DeclareMathOperator{\act}{act}
\DeclareMathOperator{\ACT}{ACT}
\DeclareMathOperator{\Res}{Res}
\DeclareMathOperator{\Fix}{Fix}
\DeclareMathOperator{\Map}{Map}
\DeclareMathOperator{\lMap}{Inv_l^r}
\DeclareMathOperator{\rMap}{Inv_r^l}
\DeclareMathOperator{\InvE}{INV}
\DeclareMathOperator{\Isom}{Isom}
\DeclareMathOperator{\Ho}{Ho}
\DeclareMathOperator{\Set}{Sets}
\DeclareMathOperator{\ev}{\mathcal{E}}
\DeclareMathOperator{\evl}{\mathcal{E}_l^r}
\DeclareMathOperator{\evr}{\mathcal{E}_r^l}
\DeclareMathOperator{\bev}{\overline{\ev}}
\DeclareMathOperator{\A}{Br}
\DeclareMathOperator{\End}{End}
\newcommand{\bbZ}{\mathbb Z}
\newcommand{\bbN}{\mathbb N} 
\newcommand{\conj}{\sim_{I}}
\newcommand{\UU}{\mathcal{U}}
\newcommand{\VV}{\mathcal{V}}
\newcommand{\WW}{\mathcal{W}}
\newcommand{\bact}{\overline{\act}}
\newcommand{\bACT}{\overline{\ACT}}
\newtheorem{dummy}{anything}[section]
\newtheorem{theorem}[dummy]{Theorem}
\newtheorem{lemma}[dummy]{Lemma}
\newtheorem{proposition}[dummy]{Proposition}
\theoremstyle{definition}
\newtheorem{definition}[dummy]{Definition}
\newtheorem{remark}[dummy]{Remark}
\begin{document}
	
	\title{Semigroup actions on sets and the Burnside ring}
	\author{Mehmet Akif Erdal}
	\thanks{This is the pre-print of an article published in 	Applied Categorical Structures. The final version is available online at: https://doi.org/10.1007/s10485-016-9477-4.}

	  \author{\" Ozg\"un \" Unl\"u }
	  \thanks{The second author is partially supported by T\"UBA-GEB\.IP/2013-22}

\address{Department of Mathematics,
	Bilkent University, 06800, Ankara, Turkey}
\email{merdal@fen.bilkent.edu.tr}

	\begin{abstract}
	In this paper we discuss some enlargements of the category of sets with semigroup actions and equivariant functions. We show that these  enlarged categories possess two idempotent endofunctors. In the case of groups these enlarged categories are equivalent to the usual category of group actions and equivariant functions, and these idempotent endofunctors reverse a given action. For a general semigroup we show that these enlarged categories admit homotopical category structures defined by using these endofunctors and show that up to homotopy these categories are equivalent to the usual category of sets with semigroup actions. We finally construct the Burnside ring of a monoid by using homotopical structure of these categories, so that when the monoid is a group this definition agrees with the usual definition, and we show that when the monoid is commutative, its Burnside ring is equivalent to the Burnside ring of its Gr\"othendieck group. 
	
	\end{abstract}
	\keywords{Semigroup actions; Monoid actions; Reverse actions; Homotopical category; Burnside ring}
\subjclass[2010]{16W22; 20M20; 20M35; 55U35}	
\maketitle
\section{Introduction}
\label{section:Introduction}
In the classical terminology, the category of sets with (left) actions of a monoid corresponds to the category of functors from a monoid to the category of sets, by considering monoid as a small category with a single object. If we ignore the identity morphism on the monoid, it corresponds the category of sets with actions of a semigroup, which is conventionally used in applied areas of mathematics such as computer science or physics. For this reason we try to investigate our notions for semigroups, unless we need to use the identity element. In this note we only consider the actions on sets so that we often just write ``actions of semigroups" or ``actions of monoid" without mentioning ``sets". 

Actions of semigroups  appear quite often as mathematical models of progressive processes. In computer science, for example automata, or so called state machines, can be defined using semigroup actions. In physics a dynamical system can be seen as a semigroup action. An important problem in the theory of semigroup actions is reversibility of the actions. Reversible actions are particularly important when one considers applications. For example, in \cite{landauer} Landauer establishes the relation of reversibility of computation with energy consumption. Reversibility is also a fundamental issue in the theory of quantum state machines, since a quantum automaton has to be reversible. In dynamical systems the periodic attractors can be considered as reversible parts of the dynamical system.

In the theory of group actions, when a group $G$ is given, one often considers either left actions of $G$, or right actions of $G$, or if another group $H$ is given, one considers $(G,H)$-bisets, i.e. biactions of  $G$ and $H$ so that $G$ acts from the left and $H$ acts from the right. The categories of these actions are also well studied in the literature, see e.g. \cite{bouc}. The same distinction is also present in semigroup actions. For a given semigroup $I$, we define actions by fusing previous ideas and use some exotic notion of equivariance for biactions of a semigroup on a set, in a way that the biaction behaves like a single action, which generalizes the actions from one side, so that we no longer need to call them left, right or biactions, and we call them just ``actions".

We construct the category of actions in Section \ref{section:CategoriesofIsets}, see Lemma \ref{comp}, and we denote the category of all actions by $\ACT(I)$. For groups this category will be equivalent to the one defined in the usual way, so that when $I$ is a group the category of left $I$-sets (which is equivalent to right $I$-sets) is equivalent to $\ACT(I)$. In the theory of group actions when a left group action on a set is given, one can define a right action on the same set given by acting with the inverses of elements in the group; which is often called reverse action of the given left action. A similar construction exits for right actions as well. Due to lack of inverses these ``reverse action" constructions are not possible for actions of semigroups. On the other hand, generalizations are still possible for semigroup actions on sets by  considering our definition of actions. One of the objects of this paper is to define reverse actions so that they generalize the ones for groups. For a semigroup action on a set, these constructions are called ``reversing from left to right" and ``reversing from right to left", see Section \ref{subsection:InverseActionsOnSets}. Although the reverse actions have to be defined on different sets, it agrees with the above construction up to isomorphism when we consider group actions on sets, see Theorem \ref{l2r}. These constructions define two endofunctors on $\ACT(I)$, which will be called the reversing functors, $\lMap $ and $\rMap$, which are idempotent, see Theorem \ref{idempotent}. Composition of these functors will not be idempotent in general, but when we restrict our attention to finite $I$-sets it will be idempotent.

There are several other major advantages of these definitions of actions. For a semigroup $I$, the category $\ACT(I)$ has a subcategory denoted by $\bACT(I)$, whose objects are  actions which are ``reversible on one side" with equivariant maps between them, see Section \ref{section:CategoriesofIsets}. We show that this subcategory $\bACT(I)$ possesses a homotopical category structure in the sense of \cite{dwyer}, see Section \ref{homotoical}. In some respect, we can say this paper is initiative for the usage of notion homotopy for semigroup actions in this setting. We show that in the case when $I$ is a monoid the homotopical category $\act_l(I)$, the full-subcategory of $\bACT(I)$ of finite left $I$-sets, will admit $3$-arrow calculus, so that from 27.5 of \cite{dwyer} it will be saturated. As a result  we are able to define Burnside ring of a monoid $I$, which is another main objective of this note. We denote the Burnside ring of a monoid $I$ by $\A(I)$. This construction  generalizes  the Burnside ring of $\bbN$ given in \cite{yoshida} to any monoid, so that we can propose our construction for a  generalization of ``the theory of non-invertible finite dynamical systems" (which is a proposed problem in \cite{yoshida}, page 130) to ``theory of finite state automata". We also define analogues notion to the Burnside's mark homomorphism so that we get the theory of Burnside rings, see Theorem \ref{mark}. When $I$ is a group, our construction of Burnside ring agrees with the usual one existing in the literature, see \cite{tomdieck}, which is a very important construction in group theory and homotopy theory. If $I$ is a commutative monoid and $K(I)$ is its Gr\"othendieck group, then we have proved $\A(I)$  is equal to $\A(K(I))$, see Theorem \ref{burnside}. In particular we show $\A(\bbN)=\A(\bbZ)$, so that we can add one more arrow which would be an isomorphism in the main diagram in \cite{dress} page 3. 

We also recover the idea of the attractors for a finite state automaton (or attractors for non-linear finite dynamical systems in a generalized way) in analogy with the attractors in the field of dynamical systems, as the orbits of the reverse actions of a given monoid action. When the monoid is taken as $\bbN$, this will correspond to the standard definitions. The periodic attractors will be the generators of the Burnside ring. The Burnside ring of a free monoid on an alphabet is an invariant of the types of machines can be build, so that it would be very useful in Automata theory.

\section{Actions of semigroups on sets}\label{section:ActionsOnSets}

Given sets $A$ and $B$, we denote the set of functions from $A$ to $B$ by $[A,B]$ and we denote the set of endofunctions on $A$ by $\End (A)$. One can define two distinct monoid  structures on the set $\End (A)=[A,A]$, where the identity on $A$ is the identity of the monoid. In the first one we choose the monoid operation on $\End (A)$ as the composition of endofunctions when endofunctions are applied on $A$ from the right. Then we denote this monoid by $\End_r(A)$ and we write $fg$ for the composition of $f$ and $g$ in $\End_r(A)$, which we mean $f$ is applied first then $g$. In other words if $f$ and $g$ are in $\End_r(A)$ and $a$ is in $A$ then
$$(a)(fg)=((a)f)g.$$
Similarly, for the second one we write $\End_l(A)$ for the monoid obtained by taking the monoid operation on $\End (A)$ as the composition of endofunctions when endofunctions are applied on $A$ from the left. In this case we write $f\circ g$ for the composition of $f$ and $g$ in $\End_l(A)$. In other words if $f$ and $g$ are in $\End_l(A)$ and $a$ is in $A$ then
$$(f\circ g)(a)=f(g(a)).$$ 
We can also consider the endofunction sets  $\End_r(A)$ and  $\End_l(A)$ with the underlying semigroup structure.

\subsection{Actions on sets and function sets}\label{subsection:ActionsOnSetsAndFunctionSets}

Let $I$ be a semigroup (resp. a monoid). All through this section we denote the operation in $I$ by $\otimes $. Normally one defines an action of  $I$ on a set $A$ as a function $A\times I\rightarrow A $ which is compatible with the semigroup operation; or alternatively, it can be defined as a semigroup (resp. a monoid) homomorphism from $I$ to $\End_r(A)$ and call it a right action of $I$ on $A$. One can also define an action of a semigroup $I$ on a set $A$ as a semigroup (resp. a monoid) homomorphism from $I$ to $\End_l(A)$ and call it a left action of $I$ on $A$. However, in this paper we consider an action of a semigroup (resp. a monoid) on a set as a biaction. More precisely we have the following definition:
\begin{definition} \label{definition:ActionsOfsemigroups}
	Suppose that $I$ is a semigroup (resp. monoid) and $A$ is a set. An action $\alpha$ of $I$ on $A$ is a pair $(\alpha_l,\alpha_r)$ such that $\alpha_l:I\rightarrow \End_l(A)$ and $\alpha_r:I\rightarrow \End_r(A)$ are semigroup homomorphisms (resp. monoid homomorphism) and $\alpha_l$ commutes with $\alpha_r$ so that for all $i, j$ in $I$ and $a$ in $A$ we have $$(\alpha_l(i)(a))\alpha_r(j)=\alpha_l(i)((a)\alpha_r(j)).$$
	Instead of saying $\alpha$ is an action of $I$ on $A$, we could also say $\alpha $ is a $I$-action on $A$ or say $(A,\alpha )$ is a $I$-set or just say $A$ is a $I$-set.
\end{definition}
Suppose that we have $I$-actions $\alpha=(\alpha_l,\alpha_r)$ on $A$ and $\beta=(\beta_l,\beta_r)$ on $B$. There is an induced $I$-action
$$[\alpha ,\beta]=([\alpha ,\beta]_l,[\alpha ,\beta]_r)$$
on $[A,B]$ such that for $f$ in $[A,B]$ and $i$ in $I$ the function $[\alpha ,\beta]_l(i)(f)$ is the composition
\[\begin{tikzpicture}
\node (A) at (1,0) {$A$};
\node (B) at (3,0) {$A$};
\node (c) at (5,0) {$B$};
\node (d) at (7,0) {$B$};
\path[->,font=\scriptsize,>=angle 90]
(A) edge node[above] {$\alpha_r(i)$} (B)
(B) edge node[above] {$f$} (c)
(c) edge node[above] {$\beta_l(i)$} (d);
\end{tikzpicture}\]
and $(f)[\alpha ,\beta]_r(i)$ is the composition
\[\begin{tikzpicture}
\node (A) at (1,0) {$A$};
\node (B) at (3,0) {$A$};
\node (c) at (5,0) {$B$};
\node (d) at (7,0) {$B.$};
\path[->,font=\scriptsize,>=angle 90]
(A) edge node[above] {$\alpha_l(i)$} (B)
(B) edge node[above] {$f$} (c)
(c) edge node[above] {$\beta_r(i)$} (d);
\end{tikzpicture}\]

\subsection{Equivariant functions and fixed point sets}\label{subsection:EquivaraintFunctionsAndFixedPoints}

We first are going to define centralizers of semigroup and monoid actions. Let $(A,\alpha )$ be a $I$-set where $\alpha=(\alpha_l,\alpha_r)$. Then $C_A(I)$ the centralizer of $I$ in $A$ with the action $\alpha$ is defined as 
$$C_A(I)= \{a\in A :\forall  i    \in  I, \alpha_l(i)(a)=(a)\alpha_r(i)  \}.$$

Suppose that we have $I$-actions $\alpha=(\alpha_l,\alpha_r)$ on $A$ and $\beta=(\beta_l,\beta_r)$ on $B$. Considering the $I$-action $[\alpha ,\beta]$ on $[A,B]$ we define $\mathop{Map}_I(A,B)$ namely the set of $I$-equivariant functions from $A$ to $B$ as of $I$ in $[A,B]$ with the induced action $[\alpha ,\beta]$, i.e. 
$$\Map_I (A,B)=C_{[A,B]}(I).$$
Hence a function is called a $I$-equivariant function from $A$ to $B$ if it is in $\Map_I(A,B)$, so that a function $f:A\rightarrow B$ is a $I$-equivariant function if and only if  we have the identity
$$(f(\alpha_l(i)(a)))\beta_r(i)=\beta_l(i)(f((a)\alpha_r(i)))$$
for all $i$ in $I$ and $a$ in $A$.

Here we list some of the properties of equivariant functions similar to the classical case. Let $(A,\alpha )$,  $(B,\beta )$, $(C,\gamma )$, and $(D,\delta )$ be four $I$-sets. Assume $f:A\rightarrow B$ and $h:C\rightarrow D$ be two functions. The functions $f$ and $h$ induces a function $[B,C]\rightarrow [A,D]$ which sends $g:B\rightarrow C$ to $h\circ g\circ f$. The following result shows that compositions by equivariant functions induces an equivariant function between function sets.

\begin{proposition}\label{ind-funcsets}
	If $f:A\rightarrow B$ and $h:C\rightarrow D$ are two $I$-equivariant functions then the induced function $[B,C]\rightarrow [A,D]$ by $f$ and $h$ is $I$-equivariant.
\end{proposition}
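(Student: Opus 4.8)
The map in question sends $g\in[B,C]$ to $\Phi(g)=h\circ g\circ f\in[A,D]$, where the source $[B,C]$ carries the induced action $[\beta,\gamma]$ and the target $[A,D]$ carries $[\alpha,\delta]$. By the pointwise criterion for equivariance recorded above, $\Phi$ is $I$-equivariant precisely when, for every $i\in I$ and every $g\in[B,C]$,
\[
\bigl(\Phi([\beta,\gamma]_l(i)(g))\bigr)[\alpha,\delta]_r(i)=[\alpha,\delta]_l(i)\bigl(\Phi((g)[\beta,\gamma]_r(i))\bigr).
\]
The plan is to verify this identity by evaluating both sides at an arbitrary point $a\in A$ and unwinding the definitions of all four induced actions together with $\Phi(g)=h\circ g\circ f$.

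First I would substitute the definitions of $[\beta,\gamma]_l(i)$, $[\beta,\gamma]_r(i)$, $[\alpha,\delta]_l(i)$ and $[\alpha,\delta]_r(i)$ as the prescribed triple composites. After evaluating at $a$, the left-hand side becomes $\bigl(h(\gamma_l(i)(g((f(\alpha_l(i)(a)))\beta_r(i))))\bigr)\delta_r(i)$ and the right-hand side becomes $\delta_l(i)\bigl(h((g(\beta_l(i)(f((a)\alpha_r(i)))))\gamma_r(i))\bigr)$; everything is now expressed through $f$, $g$, $h$ and the left and right operators of $\alpha,\beta,\gamma,\delta$ at $i$.

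The verification then proceeds in two steps, inserting the two equivariance hypotheses at different depths. Applying the equivariance of $h$ (with source action $\gamma$ and target action $\delta$) to the outer occurrence of $h$ transports $\gamma_l(i)$ and $\delta_r(i)$ across $h$, rewriting the left-hand side as $\delta_l(i)\bigl(h((g(\,\cdot\,))\gamma_r(i))\bigr)$ with inner argument $(f(\alpha_l(i)(a)))\beta_r(i)$. Comparing with the right-hand side, it remains only to identify the two arguments fed into $g$, namely $(f(\alpha_l(i)(a)))\beta_r(i)$ and $\beta_l(i)(f((a)\alpha_r(i)))$; but these are equal precisely by the equivariance of $f$ (with source action $\alpha$ and target action $\beta$). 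This closes the argument, and I note that $g$ enters only as a variable, so no equivariance of $g$ is required.

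I expect the only genuine obstacle to be notational bookkeeping: keeping straight which maps act from the left and which from the right, and recognizing that the two hypotheses must be applied at different layers, with $h$-equivariance on the outermost layer and $f$-equivariance on the innermost argument of $g$. It is worth emphasizing that one should \emph{not} try to factor $\Phi$ into equivariant pre- and post-composition maps and then invoke a lemma of the form ``a composite of equivariant maps is equivariant'': in this bi-action setting the crossed left/right conditions do not chain through an intermediate map, and it is exactly the sandwich shape $h\circ g\circ f$ that lets the two hypotheses fit together.
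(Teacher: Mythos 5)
Your proposal is correct and follows essentially the same route as the paper: both reduce to the pointwise identity $(h(\gamma_l(i)(g((f(\alpha_l(i)(a)))\beta_r(i)))))\delta_r(i)=\delta_l(i)(h((g(\beta_l(i)(f((a)\alpha_r(i)))))\gamma_r(i)))$, obtained by applying $h$-equivariance at the outer layer and $f$-equivariance to the argument of $g$, exactly as the paper does in compressed form. Your closing remark is also apt, since in this paper composites of equivariant maps are only shown to be equivariant under a semi-invertibility hypothesis (Lemma \ref{comp}), so no such chaining lemma is available here.
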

\begin{proof}
	Since $f$ and $h$ are $I$-equivariant, we have
	$$(h(\gamma_l(i)(g(   (  f(  \alpha_l(i)(a)   )   )\beta _r(i)    ))))\delta _r(i)
	= \delta _l(i)(h((g(  \beta _l(i)(  f(  (a)\alpha _r(i)   )   )      ))\gamma _r(i)))$$
	for all $a$ in $A$, $i$ in $I$ and $g$ in $[B,C]$. Hence we have
	$$(h\circ (([\beta ,\gamma ]_l(i)(g))\circ f) )[\alpha,\delta ]_r(i) =
	[\alpha,\delta ]_l(i)(h\circ (((g)[\beta ,\gamma ]_r(i))\circ f) )$$
	for all $i$ in $I$ and $g$ in $[B,C]$. This means the induced function from $[B,C]$ to $[A,D]$ is $I$-equivariant.
\end{proof}

Let $A$ be a $I$-set. Then we define $\Fix_I(A)$ namely the set of fix points of $I$ on $A$ as  
$$\Fix_I(A)=\Map_I(*,A)$$
where $*$ denotes a set with one element and the trivial $I$-action on it.
\begin{proposition}
	Let $I$ be a semigroup or a monoid, and $A$, $B$ be two $I$-sets. Then we have a bijection
	$$\Map_I(A,B)\cong \Fix_I([A,B]).$$
\end{proposition}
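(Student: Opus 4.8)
The plan is to collapse both sides of the asserted bijection to the single centralizer $C_{[A,B]}(I)$ and to identify them by evaluation at the unique point of $*$. The whole statement is really an instance of the tautology ``a fixed point is a map out of the point,'' so the work is entirely in unwinding the definitions.

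First I would rewrite the right-hand side. By the definition of fixed points, $\operatorname{Fix}_I([A,B]) = \Map_I(*,[A,B]) = C_{[*,[A,B]]}(I)$, the set of functions $g \colon * \to [A,B]$ lying in the centralizer of the induced action $[\tau,[\alpha,\beta]]$, where $\tau = (\tau_l,\tau_r)$ denotes the trivial action on $*$, so that $\tau_l(i) = \tau_r(i) = \id_*$ for every $i$ in $I$.

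Next, for an arbitrary $I$-set $(C,\gamma)$, evaluation at the unique point of $*$ furnishes a natural bijection $\mathrm{ev}\colon [*,C]\to C$, $g\mapsto g(*)$. The crux of the proof is to check that $\mathrm{ev}$ is $I$-equivariant once $[*,C]$ is given the induced action $[\tau,\gamma]$ and $C$ its action $\gamma$; this is the only place where triviality of the action on $*$ is used. Indeed, since $\tau_l(i)(*) = *$ and $(*)\tau_r(i) = *$, the formulas defining the induced action simplify to $\bigl([\tau,\gamma]_l(i)(g)\bigr)(*) = \gamma_l(i)(g(*))$ and $\bigl((g)[\tau,\gamma]_r(i)\bigr)(*) = (g(*))\gamma_r(i)$, so that $\mathrm{ev}$ intertwines $[\tau,\gamma]$ with $\gamma$ on the nose.

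Finally, an equivariant bijection restricts to a bijection of centralizers, whence $C_{[*,C]}(I)\cong C_C(I)$. Specializing to $C=[A,B]$ and $\gamma=[\alpha,\beta]$, and recalling that $\Map_I(A,B)=C_{[A,B]}(I)$, gives $\operatorname{Fix}_I([A,B])\cong\Map_I(A,B)$. Concretely, the bijection is $g\mapsto f := g(*)$: the equivariance condition for $g$, read off at the point $*$, becomes $(f)[\alpha,\beta]_r(i)=[\alpha,\beta]_l(i)(f)$ for all $i$, which is exactly the condition for $f$ to lie in $C_{[A,B]}(I)$. I expect the equivariance verification of $\mathrm{ev}$ in the previous paragraph to be the only non-formal step; everything else is direct substitution into the definitions.
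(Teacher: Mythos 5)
Your proposal is correct and takes essentially the same route as the paper: the paper proves the same general statement that for any $I$-set $X$ there is a bijection between $C_X(I)$ and $C_{[*,X]}(I)$, given by sending $z$ to the constant function at $z$ (the inverse of your evaluation map $g \mapsto g(*)$), and then specializes to $X = [A,B]$. Your write-up just makes explicit the equivariance/intertwining check that the paper leaves implicit.
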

\begin{proof}
	More generally for an $I$-set $A$ we have a bijection from $C_I(A)$ to $C_I([*, A])$ sending $z$ in $C_I(A)$ to the function from $*$ to $A$ which sends the unique point in $*$ to $z$.
\end{proof}
Given a function $f:A\rightarrow [B,C]$ we define $\bar{f}:A\times B \rightarrow C$ by $\bar{f}(a,b)=f(a)(b)$ for all $a$ in $A$ and $b$ in $B$.
\begin{proposition}
	Let $A$, $B$ and $C$ be three $I$-sets with $I$-actions $\alpha $, $\beta $ and $\gamma$ respectively. Then the function
	$$\Map_I(A,[B,C])\rightarrow \Map_I(A\times B, C)$$ defined by $f\mapsto \bar{f}$ is a bijection.
\end{proposition}
\begin{proof}
	We only need to show that $f:A\rightarrow [B,C]$ is a $I$-equivariant function if and only if $\bar{f}:A\times B \rightarrow C$ is a $I$-equivariant function.  We know that the statement $f:A\rightarrow [B,C]$ is a $I$-equivariant function means 	
	$$(f)[\alpha,[\beta,\gamma]]_r(i)=[\alpha,[\beta,\gamma ]]_l(i)(f)$$
	for all $i$ in $I$. In other words it means
	$$(f(\alpha_l(i)(a))(\beta_l(i)(b)))\gamma _r(i)=\gamma _l(i)(f((a)\alpha_r(i))((b)\beta_r(i)))$$
	for all $a$ in $A$, $b$ in $B$ and $i$ in $I$.  Hence it is equivalent to
	$$(\bar{f}(\alpha_l(i)(a),\beta_l(i)(b)))\gamma _r(i)=\gamma _l(i)(\bar{f}((a)\alpha_r(i),(b)\beta_r(i)))$$
	for all $a$ in $A$, $b$ in $B$ and $i$ in $I$. Therefore the statement $f:A\rightarrow [B,C]$ is a $I$-equivariant function is equivalent to $$(\bar{f})[\alpha \times \beta,\gamma]_r(i)=[\alpha \times \beta,\gamma]_l(i)(\bar{f})$$ which means $\bar{f}:A\times B \rightarrow C$ is a $I$-equivariant function.
\end{proof}
\begin{remark}\label{prod}
	 If $A$, $B$ and $C$ be three $I$-sets, then there is an obvious bijection
	 $$\Map_I(A, B\times C])\rightarrow  \Map_I(A ,B)\times\Map_I(A , C).$$
\end{remark}

\section{Categories of I-sets}\label{section:CategoriesofIsets}

Observe that when $I$ is a monoid, the usual categories of left (resp. right) actions of $I$ are just functor categories from $I$ to $\Set$, the category of sets. In this section, for a semigroup or a monoid $I$, we will define several categories whose class of objects are a subclass of the ``sets with an action of $I$" defined as in the sense of the previous section, so that they contains the usual category of left and right actions of $I$ as a full-subcategory. In each case the morphisms will be  $I$-equivariant functions defined according to the case being considered. In order to define objects of these categories we will first discuss semi-reversible actions and actions which are reversible on one side. Secondly, we will show that the composition of two $I$-equivariant functions  is an $I$-equivariant function under certain conditions. Finally we will give the definitions of categories of certain $I$-sets.

\subsection{Semi-reversible actions and actions reversible on one side}

Let $(A,\alpha )$ be an $I$-set. First note that if $\alpha_l(i)$  is an automorphism of $A$ then for all $a$ in $A$ then we have the equality
$$ 
\alpha_l(i)^{-1}((a)\alpha_r(j))=(\alpha_l(i)^{-1}(a))\alpha_r(j).
$$
and similarly in the case when $\alpha_r(i)$  is an automorphism of $A$ then we have
$$
\alpha_l(i)((a)\alpha_r(j)^{-1})=(\alpha_l(i)(a))\alpha_r(j)^{-1}.
$$

We say  $(A,\alpha )$  is ``semi-reversible" if either $\alpha _l(i)$ or $\alpha _r(i)$ is an automorphism of $A$ for all $i$ in $I$. We say $\alpha_l$ (resp. $\alpha_r$) is reversible if  $\alpha _l(i)$ (resp.  $\alpha_r(i)$ ) is an automorphism of $A$ for all $i$. A set $(A,\alpha )$  is called ``reversible on one side" if either $\alpha _l$ or $\alpha _r$ is reversible. Note that if an action is reversible on one side then it is semi-reversible. Hence the results about semi-reversible actions in this section are also true for actions that are reversible on one side.

\subsection{Compositions of equivariant functions}\label{subsection:CompositionofEquivFunc}

Compositions of equivariant functions may not be equivariant unless we have a semi-reversibility assumption in the following sense. Let $S$ be a set and $(B_s,\beta(s))$ be a semi-reversible $I$-set for $s$ in $S$. Define $B$ as the product $\Pi_{s\in S}B_s$ with the $I$-action given by $\beta (s)$ on the $s^{th}$ component. Assume $(A,\alpha )$ and $(C,\gamma )$ are $I$-sets and $f:A\rightarrow B$, $g:B\rightarrow C$ are $I$-equivariant functions. Then we have the following result
\begin{lemma}\label{comp}
	$g\circ f$ is $I$-equivariant.
\end{lemma}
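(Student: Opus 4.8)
The plan is to deduce the equivariance of $g\circ f$ from a \emph{single} application of the equivariance of $g$. Fix $i\in I$ and $a\in A$, and put $x=f(\alpha_l(i)(a))$ and $y=f((a)\alpha_r(i))$ in $B$. The equivariance of $f$ says exactly that $(x)\beta_r(i)=\beta_l(i)(y)$, while what must be shown, that $g\circ f$ is equivariant, unwinds to $(g(x))\gamma_r(i)=\gamma_l(i)(g(y))$. Now the equivariance of $g$, applied to an arbitrary $b\in B$, reads $(g(\beta_l(i)(b)))\gamma_r(i)=\gamma_l(i)(g((b)\beta_r(i)))$. Hence it suffices to produce a single $b\in B$ with $\beta_l(i)(b)=x$ and $(b)\beta_r(i)=y$; substituting such a $b$ into this last identity yields precisely the equality we want.

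The construction of $b$ is the crux, and it is where the product hypothesis does its work. Observe first that $B=\prod_{s\in S}B_s$ need not be semi-invertible even though every factor is: for a fixed $i$ the invertible map may be $\beta(s)_l(i)$ on some factors and $\beta(s)_r(i)$ on others, so that neither $\beta_l(i)$ nor $\beta_r(i)$ is an automorphism of $B$. Consequently $b$ cannot be produced by inverting one global map; it must be assembled factor by factor. Writing $x=(x_s)_{s}$ and $y=(y_s)_{s}$, the relation $(x)\beta_r(i)=\beta_l(i)(y)$ becomes $(x_s)\beta(s)_r(i)=\beta(s)_l(i)(y_s)$ for every $s$.

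Fix $s$ and use that $(B_s,\beta(s))$ is semi-invertible. If $\beta(s)_l(i)$ is an automorphism I set $b_s=\beta(s)_l(i)^{-1}(x_s)$, so that $\beta(s)_l(i)(b_s)=x_s$ is automatic; the remaining identity $(b_s)\beta(s)_r(i)=y_s$ then follows from the first of the two commuting relations recorded above, $\beta(s)_l(i)^{-1}((a)\beta(s)_r(i))=(\beta(s)_l(i)^{-1}(a))\beta(s)_r(i)$, combined with the factorwise relation. If instead $\beta(s)_r(i)$ is an automorphism I set $b_s=(y_s)\beta(s)_r(i)^{-1}$, which forces $(b_s)\beta(s)_r(i)=y_s$, and I check $\beta(s)_l(i)(b_s)=x_s$ from the second commuting relation $\beta(s)_l(i)((a)\beta(s)_r(i)^{-1})=(\beta(s)_l(i)(a))\beta(s)_r(i)^{-1}$. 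Semi-invertibility guarantees that at least one case applies for every $s$, so $b=(b_s)_{s}$ is a well-defined element of $B$ satisfying $\beta_l(i)(b)=x$ and $(b)\beta_r(i)=y$, which completes the argument.

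The only genuine computations are the two verifications in the case analysis, and these are immediate from the commuting identities already established. I expect the main obstacle to be conceptual rather than computational: recognizing that equivariance of $g$ requires only \emph{one} preimage $b$, and that such a preimage can always be built componentwise precisely because each factor---though not the product $B$ itself---is semi-invertible.
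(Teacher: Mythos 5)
Your proof is correct and is essentially the paper's own argument, organized more transparently: the componentwise element $b$ you construct (via $\beta(s)_l(i)^{-1}(x_s)$ or $(y_s)\beta(s)_r(i)^{-1}$ according to which side is invertible) is exactly the intermediate quantity $H(a,i)$ in the paper's chain of equalities, and both proofs then conclude by a single application of the equivariance of $g$ at that element. The only difference is presentational—you isolate the ``simultaneous preimage'' property $\beta_l(i)(b)=f(\alpha_l(i)(a))$, $(b)\beta_r(i)=f((a)\alpha_r(i))$ up front, whereas the paper buries the same construction inside a case-by-case computation.
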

\begin{proof}
	We want to show
	$$
	\begin{aligned}
	\gamma_l(i)((g\circ f)((a)\alpha_r(i)))
	&=((g\circ f)(\alpha_l(i)(a))) \gamma_r(i)&&\nonumber\\
	\end{aligned}
	$$
	for any $a$ in $A$ and $i$ in $I$.  Let us denote the left-hand side of above equality by $\operatorname{LHS}$ and the right-hand side by $\operatorname{RHS}$. Let $f_s$ denote the $s^{th}$ component of $f$. Given any $s$ in $S$ and $i$ in $I$, since $(B_s,\beta(s))$ is semi-reversible, there exists $x(s,i)$ in $\{l,r\}$ such that $\beta(s)_{x(s,i)}(i)$ is an automorphism of $B_s$. Since $\beta(s)_{x(s,i)}(i)^{-1}\circ \beta(s)_{x(s,i)}(i)$ is identity, we have
	$$
	\begin{aligned}
	\operatorname{LHS}
	&=\gamma_l(i)(g(f((a)\alpha_r(i)))&&\nonumber\\
	&=\gamma_l(i)(g((f_s((a)\alpha_r(i)))_{s\in S}))&&\\
	&=\gamma_l(i)(g((E(a,i)_s)_{s\in S}))&&\\
	\end{aligned}
	$$
	where
	$$	E(a,i)_s=
	\left\{ \begin{array}{cc}
	(\beta(s)_l(i)^{-1}\circ \beta(s)_l(i))(f_s((a)\alpha_r(i))) &\text{ if }x(s,i)=l \\
	(f_s((a)\alpha_r(i)))(\beta(s)_r(i)^{-1}\beta(s)_r(i)) &\text{ if }x(s,i)=r \\
	\end{array} \right.
	$$
	We have
	$$\operatorname{LHS}=\gamma_l(i)(g((F(a,i)_s)_{s\in S}))$$
	if $F(a,i)_s$ is defined as follows:
	$$
	F(a,i)_s=
	\left\{ \begin{array}{cc}
	\beta(s)_l(i)^{-1}(\beta(s)_l(i)(f_s((a)\alpha_r(i)))) &\text{ if }x(s,i)=l \\
	((f_s((a)\alpha_r(i)))\beta(s)_r(i)^{-1})\beta(s)_r(i) &\text{ if }x(s,i)=r \\
	\end{array} \right.
	$$
	Since $f$ is $I$-equivariant means $f_s$ is $I$-equivariant for all $s$ in $S$, we have
	$$\operatorname{LHS}=\gamma_l(i)(g((G(a,i)_s)_{s\in S}))$$ where
	$$
	G(a,i)_s=
	\left\{ \begin{array}{cc}
	\beta(s)_l(i)^{-1}((f_s(\alpha_l(i)(a)))\beta(s)_r(i)) &\text{ if }x(s,i)=l \\
	((f_s((a)\alpha_r(i)))\beta(s)_r(i)^{-1})\beta(s)_r(i) &\text{ if }x(s,i)=r \\
	\end{array} \right.
	$$
	By the above equality
	$$\operatorname{LHS}= (g(\beta(s)_l(i)((H(a,i)_s))_{s\in S})) \gamma_r(i)$$
	with
	$$
	H(a,i)_s=
	\left\{ \begin{array}{cc}
	\beta(s)_l(i)^{-1}(f_s(\alpha_l(i)(a))) &\text{ if }x(s,i)=r \\
	(f((a)\alpha_r(i)))\beta(s)_r(i)^{-1}&\text{ if }x(s,i)=r \\
	\end{array} \right.
	$$
	Since $g$ is $I$-equivariant
	$$
	\begin{aligned}
	\operatorname{LHS}
	&=(g((J(a,i)_s)_{s\in S})) \gamma_r(i)&&\nonumber\\
	&=(g((K(a,i)_s)_{s\in S})) \gamma_r(i)&&\\
	\end{aligned}
	$$
	where
	$$
	J(a,i)_s=
	\left\{ \begin{array}{cc}
	\beta(s)_l(i)(\beta(s)_l(i)^{-1}(f_s(\alpha_l(i)(a)))) &\text{ if }x(s,i)=l \\
	\beta(s)_l(i)((f_s((a)\alpha_r(i)))\beta(s)_r(i)^{-1})&\text{ if }x(s,i)=r \\
	\end{array} \right.
	$$
	and
	$$
	K(a,i)_s=
	\left\{ \begin{array}{cc}
	f_s(\alpha_l(i)(a)) &\text{ if }x(s,i)=l \\
	(\beta(s)_l(i)(f_s((a)\alpha_r(i))))\beta(s)_r(i)^{-1}&\text{ if }x(s,i)=r \\
	\end{array} \right.
	$$
	Since $f_s$ is $I$-equivariant for all $s\in S$, we have
	$$
	\begin{aligned}
	\operatorname{LHS}
	&=(g((L(a,i)_s)_{s\in S})) \gamma_r(i)&&\nonumber\\
	&=(g((f_s(\alpha_l(i)(a)))_{s\in S}) \gamma_r(i)&&\\
	&=(g(f(\alpha_l(i)(a))) \gamma_r(i)&&\\
	&=\operatorname{RHS}&&\\
	\end{aligned}
	$$
	where
	$$
	L(a,i)_s=
	\left\{ \begin{array}{cc}
	f_s(\alpha_l(i)(a)) &\text{ if }x(s,i)=l \\
	((f_s(\alpha_l(i)(a)))\beta(s)_r(i))\beta(s)_r(i)^{-1}&\text{ if }x(s,i)=r \\
	\end{array} \right.
	$$
	This completes the proof.
\end{proof}

\begin{proposition}\label{isom}
	Let $f:A\to B$ be a bijective equivariant function where $(A,\alpha)$ and $(B,\beta)$ are semi-reversible finite $I$-sets. Then the inverse $f^{-1}$ is equivariant.
\end{proposition}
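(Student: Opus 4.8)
The plan is to fix $i\in I$ and verify the single identity
\[(f^{-1}(\beta_l(i)(b)))\alpha_r(i)=\alpha_l(i)(f^{-1}((b)\beta_r(i)))\]
for every $b\in B$, since this (over all $i$ and $b$) is exactly what it means for $f^{-1}$ to be $I$-equivariant. It is cleanest to view $\alpha_l(i),\alpha_r(i)$ as self-maps of $A$ and $\beta_l(i),\beta_r(i)$ as self-maps of $B$, and to read the hypothesis that $f$ is equivariant as the operator identity $\beta_r(i)\circ f\circ \alpha_l(i)=\beta_l(i)\circ f\circ \alpha_r(i)$; the goal is the analogous identity with $f$ replaced by $g:=f^{-1}$ and the roles of $\alpha$ and $\beta$ interchanged. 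Throughout I will use that $\alpha_l$ commutes with $\alpha_r$ and $\beta_l$ with $\beta_r$, together with the remark recorded just before the definition of semi-invertibility: once $\alpha_l(i)$ is invertible, its inverse still commutes with $\alpha_r(i)$ (and similarly on $B$).

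The first step is a reduction: for the fixed $i$, I claim one may assume the two relevant automorphisms lie on the \emph{same} side, i.e. that either $\alpha_l(i)$ and $\beta_l(i)$ are both bijective, or $\alpha_r(i)$ and $\beta_r(i)$ are both bijective. Semi-invertibility of $A$ and of $B$ leaves four possibilities. The two ``matched'' possibilities need no argument. In a ``mixed'' possibility, say $\alpha_l(i)$ and $\beta_r(i)$ bijective, the left side $\beta_r(i)\circ f\circ\alpha_l(i)$ of the equivariance identity is a composite of three bijections, hence bijective; comparing with the right side $\beta_l(i)\circ f\circ\alpha_r(i)$ and using that $f$ is a bijection, injectivity forces $\alpha_r(i)$ to be injective and surjectivity forces $\beta_l(i)$ to be surjective. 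This is the point at which finiteness of $A$ and $B$ is indispensable: an injective (resp. surjective) self-map of a finite set is automatically bijective, so all four maps become automorphisms and in particular the matched right-hand condition holds. The other mixed possibility is handled identically with $l$ and $r$ exchanged.

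The second step proves the identity under the matched left-hand condition, $\alpha_l(i)$ and $\beta_l(i)$ bijective. Here the manipulation is purely formal: I solve the equivariance identity for $\beta_l(i)^{-1}\circ\beta_r(i)\circ f$, then precompose and postcompose with $g$ so that the two copies of $f$ cancel against $gf=\mathrm{id}$ and $fg=\mathrm{id}$, push $\alpha_l(i)^{-1}$ past $\alpha_r(i)$ by commutativity, and finally reabsorb $\beta_l(i)^{-1}\circ\beta_r(i)\circ\beta_l(i)=\beta_r(i)$ using that $\beta_l(i)$ and $\beta_r(i)$ commute. The outcome is precisely the sought identity for $g$ at $i$. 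The matched right-hand case, $\alpha_r(i)$ and $\beta_r(i)$ bijective, follows from this by the symmetry of the entire setup under simultaneously swapping the subscripts $l$ and $r$ on both $\alpha$ and $\beta$, under which both the hypothesis and the conclusion are invariant. Assembling the reduction with these two computations disposes of every $i$, so $f^{-1}$ is equivariant.

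The genuine obstacle I anticipate is the reduction step, not the formal algebra. The naive idea of simply inverting the equivariance identity fails because $f$ is sandwiched between a left action and a right action and cannot be conjugated through; in the mixed cases one is therefore forced to upgrade the given one-sided automorphisms to full bijectivity, and that upgrade is simply false without finiteness. Once the matched condition is secured, the remainder is routine bookkeeping with commuting invertible operators.
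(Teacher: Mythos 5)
Your proposal is correct and follows essentially the same route as the paper: a case split on which sides the automorphisms live, with the matched cases settled by formal manipulation using commutativity and the invertibility of $f$ (no finiteness needed), and the mixed cases reduced to the matched ones by observing that the equivariance identity makes one composite a bijection, so that injectivity/surjectivity of the remaining self-maps upgrades to bijectivity by finiteness. Your write-up is, if anything, slightly more explicit than the paper's about exactly where injectivity forces $\alpha_r(i)$ and surjectivity forces $\beta_l(i)$ to be bijections, but the underlying argument is the same.
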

\begin{proof}
	Assume $f$ is equivariant. We want to show $$\alpha_l(i)(f^{-1}((b)\beta_r(i))) =(f^{-1}(\beta_l(i)(b)))\alpha_r(i).$$
	Assume first both $\alpha_l(i)$ and $\beta_l(i)$ is isomorphism. First since both $f$ and $\alpha_l(i)$ are bijective, we can write	$$\beta_l(i)((b)\beta_r(i))=(f(\alpha_l(i)\alpha_l(i)^{-1}(f^{-1}(\beta_l(i)(b))))\beta_r(i). $$
	Since $f$ is equivariant,
	$$\beta_l(i)((b)\beta_r(i))=\beta_l(i)(f(\alpha_l(i)^{-1}((f^{-1}(\beta_l(i)(b)))\alpha_r(i)))). $$
	and since $\beta_l(i)$ is bijective, we get
	$$(b)\beta_r(i) =f(\alpha_l(i)^{-1}((f^{-1}(\beta_l(i)(b)))\alpha_r(i))) $$ which implies $$\alpha_l(i)(f^{-1}((b)\beta_r(i))) =(f^{-1}(\beta_l(i)(b)))\alpha_r(i).$$
	The case when both $\alpha_r(i)$ and $\beta_r(i)$ is isomorphism is similar. Assume now  $\alpha_r(i)$ and $\beta_l(i)$ are isomorphisms. Since $f$ is an isomorphism, the composition of $f^{-1}$, $\alpha_r(i)$ and $\beta_l(i)$ is an isomorphism. Since $A$ and $B$ are finite sets, from the equality 
	$$(f(\alpha_l(i)(a)))\beta_r(i)=\beta_l(i)(f((a)\alpha_r(i)))$$
	we get $\alpha_l(i)$ and $\beta_r(i)$ are isomorphisms as well. Hence,  $f^{-1}$ is equivariant. The case  $\alpha_r(i)$ and $\beta_l(i)$ are isomorphism is the same. Hence this proves the statement.
\end{proof}
Observe that if the semi-reversible actions are isomorphism in the same side, then we do not need the finiteness assumption. However, in general this proposition is not correct when we drop the assumption on finiteness. For example if $I=\bbN$ and $A=B=\bbN$ with the actions $\alpha=(\alpha_l,1)$ on $A$ such that $\alpha_l(1)(i)=i+1$ and $\beta=(1,\beta_r)$ on $B$ such that $\beta_r(1)(i+1)=i$ and $\beta_r(1)(0)=0$, then the identity function $id:A\to B$ is equivariant but $id:B\to A$ is not.

\subsection{Definitions of categories of $I$-sets}\label{subsection:EquivalenceOfViewPoints}

Let $I$ be a semigroup, considering the usual definition one-sided of actions we let $\ACT_l(I)$,  $\ACT_r(I)$, $\act_l(I)$, $\act_r(I)$ to denote the category of left $I$-sets, right $I$-sets, finite left $I$-sets and finite right $I$-sets respectively, with $I$-equivariant maps. Now we define four new categories denoted by $\ACT(I)$, $\act(I)$, $\bACT(I)$ and $\bact(I)$. The objects of the categories $\ACT(I)$ and $\act(I)$ are $I$-sets which are products of semi-reversible $I$-sets and finite $I$-sets which are products of semi-reversible $I$-sets respectively, where $I$-sets are defined as in the previous section. The objects of $\bACT(I)$ and $\bact(I)$ are $I$-sets which are products of sets with actions that are reversible on one side and finite $I$-sets which are products of  sets with actions that are reversible on one side respectively, where again  $I$-sets are defined as in the previous section. The morphisms of the categories $\ACT(I)$, $\act(I)$, $\bACT(I)$, $\bact(I)$ are $I$-equivariant functions (defined as in Section \ref{subsection:EquivaraintFunctionsAndFixedPoints}).

For a semigroup (or monoid) we have the following diagram  
\[\begin{tikzpicture}
\node (0) at (4,6) {$\ACT(I)$};
\node (1) at (6,6) {$\bACT(I)$};		\node (11) at (2,6) {$\act(I)$};
\node (1r) at (7,4) {$\ACT_r(I)$};	\node (1l) at (4,4) {$\ACT_l(I)$};
\node (011) at (1,4) {$\bact(I)$};
\node (2l) at (2,2) {$\act_l(I)$};		\node (2r) at (6,2) {$\act_r(I)$};
\path[->,left hook-latex,font=\scriptsize,>=angle 90]
(1) edge node[above]  { } (0)
(2r) edge node[above] { } (011)
(2l) edge node[left] { } (011);
\path[->,right hook-latex,font=\scriptsize,>=angle 90]
(11) edge node[right] { } (0)
(1r) edge node[left] { } (1)
(2r) edge node[above] { } (1r)
(1l) edge node[above] { } (1)	
(011) edge node[above]  { } (11)
(011) edge node[right] { } (1)
(2l) edge node[left] { } (1l);
\end{tikzpicture}\]
so that all of the functors are inclusions, which map an $I$-set to itself.

\section{Action reversing functors}\label{section:InverseActionFunctor}

For a semigroup $I$ we define four semigroup homomorphisms as follows: The homomorphisms
$$\iota_l:I\rightarrow \End_l(I)\text{ \ \ and \ \ }\iota_r:I\rightarrow \End_r(I)$$
sends every element to the identity endofunction and the homomorphisms
$$\mu_l:I\rightarrow \End_l(I)\text{ \ \ and \ \ } \mu_r:I\rightarrow \End_r(I)$$
are given by multiplication from the left and the right, respectively.

\subsection{Reversing actions from left to right}\label{subsection:InverseActionsOnSets}
Consider $I$ itself as a $I$-set with the action $(\iota_l,\mu_r)$. Let $A$ be a set with a $I$-action $\alpha$. To indicate the right action on $I$ is trivial let us denote the set of equivariant functions, $\Map_I(I,A)$, by $\lMap (A)$. Let $f:I\rightarrow A$ be a $I$-equivariant map, i.e., for every $i,j$ in $I$ we have $$(f(j))\alpha_r(i)=\alpha_l(i)(f(j\otimes i))$$
We define a $I$-action $\theta=(\theta _l,\theta_r)$ on $ \lMap (A)$ as follows: The left component
$$\theta _l:I\rightarrow \End_l(\lMap (A))$$
sends an element $k$ in $I$ to the function
$$\theta _l(k):\lMap (A)\rightarrow \lMap (A)$$
defined as the identity function. Hence the function $\theta_l(k)$ sends $f$ to $f$. The right component
$$\theta_r:I\rightarrow \End_r(\lMap (A))$$
sends an element $k$ in $I$ to the function
$$\theta_r(k):\lMap (A)\rightarrow \lMap (A)$$ defined as the function that  sends $f$ to the composition
\[\begin{tikzpicture}
\node (A) at (1,0) {$I$};
\node (B) at (3,0) {$I$};
\node (c) at (5,0) {$A$};

\path[->,font=\scriptsize,>=angle 90]
(A) edge node[above] {$\mu_l(k)$} (B)
(B) edge node[above] {$f$} (c);
\end{tikzpicture}\]
so that we have $(f)\theta_r(k)(j)=f(k\otimes j)$, for every $j,k \in I$. Since $I$ is semi-reversible, by Lemma \ref{comp} we can say $\theta$ is well defined.

We call this action the reverse (from left to right) action of $\alpha$. In fact this construction is functorial on $\ACT(I)$ and we denote the functor sending an $I$-action on a set $A$ to the reverse $I$-action on $\lMap (A)$  by $$\lMap :\ACT(I)\rightarrow \ACT(I).$$ This functor sends a morphism $f:A\rightarrow B$ to the morphism which sends $h:I\rightarrow A$ to the composition $f\circ h$ from $I$ to $B$. Given $I$-set $A$ we can define the evaluation function $$\evl:\lMap (A)\to A$$ given by $\evl(f)=f(1)$ whenever we have $1$.  

\begin{lemma}\label{evl}
	$\evl$ defines a natural transformation from $ \lMap $ to $id$, the identity functor.
\end{lemma}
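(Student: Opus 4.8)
The plan is to verify the two defining conditions of a natural transformation from $\lMap$ to $\id$: first, that for each object $(A,\alpha)$ of $\Act(I)$ the evaluation $\evl\colon \lMap(A)\to A$ is itself an $I$-equivariant function (so that each component is a genuine morphism), and second, that $\evl$ is compatible with morphisms, i.e. the naturality square commutes for every equivariant $f\colon A\to B$. Throughout I use that $I$ is a monoid with identity $1$, which is exactly what makes $\evl(f)=f(1)$ well defined.

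For the equivariance at a fixed object, I would unwind the equivariance identity for $\evl$ relative to the action $\theta=(\theta_l,\theta_r)$ on $\lMap(A)$ and the action $\alpha=(\alpha_l,\alpha_r)$ on $A$; for an element $f\in\lMap(A)$ and $i\in I$ this reads
$$(\evl(\theta_l(i)(f)))\alpha_r(i)=\alpha_l(i)(\evl((f)\theta_r(i))).$$
Since $\theta_l(i)$ is the identity, the left-hand side is just $(f(1))\alpha_r(i)$; and since $(f)\theta_r(i)$ is the function $j\mapsto f(i\otimes j)$, its value at $1$ is $f(i)$, so the right-hand side is $\alpha_l(i)(f(i))$. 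Thus the equivariance of $\evl$ amounts exactly to
$$(f(1))\alpha_r(i)=\alpha_l(i)(f(i)),$$
which is precisely the defining equivariance relation $(f(j))\alpha_r(i)=\alpha_l(i)(f(j\otimes i))$ of the element $f\in\lMap(A)$, specialized to $j=1$. So equivariance is immediate once both sides are computed.

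For naturality, I would simply chase an element $f\in\lMap(A)$ around the square. Applying $\lMap(f)$ first (for a morphism that I will call $\phi\colon A\to B$, to avoid the clash with elements named $f$) yields $\phi\circ f$, and then $\evl$ gives $(\phi\circ f)(1)=\phi(f(1))$; applying $\evl$ first gives $f(1)$, and then $\phi$ gives $\phi(f(1))$. The two agree, so the square commutes.

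The only place any structure is genuinely used is the equivariance verification, and even there the argument collapses to setting $j=1$ in the relation defining the elements of $\lMap(A)$; naturality is a one-line diagram chase. Hence I expect no real obstacle. The one point to keep straight is that the left component $\theta_l$ acts trivially while the right component $\theta_r$ translates the argument, so that the two sides of the equivariance identity for $\evl$ land on $f(1)$ and $f(i)$ respectively.
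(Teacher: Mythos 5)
Your proof is correct and follows essentially the same route as the paper: the equivariance of $\evl$ reduces, after noting that $\theta_l(i)$ is the identity and $((f)\theta_r(i))(1)=f(i)$, to the defining relation $(f(j))\alpha_r(i)=\alpha_l(i)(f(j\otimes i))$ of elements of $\lMap(A)$ specialized at $j=1$, which is exactly the chain of equalities in the paper's proof. Your explicit check of the naturality square is a small completeness bonus that the paper leaves implicit.
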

\begin{proof}Let  $A$ be an $I$-set with action $\alpha$. From the equality	$$\alpha_l(i)(\evl((f)\theta_r(i)))=\alpha_l(i)(f(i))=(f(1))\alpha_r(i)=(\evl(f))\alpha_r(i) ,$$
we can say $\evl$ is equivariant, so that it defines a natural transformation from $ \lMap $ to $id$.
\end{proof}

\subsection{Reversing actions from right to left}

We can also define reverse actions from right to left. This time we consider $I$  as an $I$-set with the action $(\mu_l,\iota_r)$, so that an $I$-equivariant function   $f:I\rightarrow A$ satisfies
$$(f(i\otimes j)) \alpha_r(i)=\alpha_l(i)(f(j))$$
for every $i,j$ in $I$. In this case we denote the set of equivariant functions from $I$ to $A$,  $\Map_I(I,A)$, by $\rMap (A)$. We define a $I$-action $\vartheta=(\vartheta _l,\vartheta_r)$ on $ \rMap (A) $ as follows: The left component
$$\vartheta_l:I\rightarrow \End_l(\rMap (A))$$
sends an element $k$ in $I$ to the function
$$\vartheta_l(k):\rMap (A)\rightarrow \rMap (A)$$ defined as the function that  sends $f$ to the composition
\[\begin{tikzpicture}
\node (A) at (1,0) {$I$};
\node (B) at (3,0) {$I$};
\node (c) at (5,0) {$A$};

\path[->,font=\scriptsize,>=angle 90]
(A) edge node[above] {$\mu _r(k)$} (B)
(B) edge node[above] {$f$} (c);
\end{tikzpicture}\]
so that we have $\vartheta_l(k)(f)(i)=f(i\otimes k)$.
The right component
$$\vartheta_r:I\rightarrow \End_r(\rMap (A))$$
sends an element $k$ in $I$ to the function
$$\vartheta_r(k):\rMap (A)\rightarrow \rMap (A)$$ defined as the identity function. Hence the function $\vartheta_r(k)$ sends $f$ to $f$. Again by Lemma \ref{comp} this construction is well defined. There is again an equivariant evaluation function $$\evr:\rMap (A)\to A$$ given by $\evr(f)=f(1)$ provided that we have $1$, which is equivariant. Similar to the Lemma \ref{evl}, $\evr$ defines a natural transformation from $ \rMap $ to $id$.

The following is an important property of reverse actions when $I$ is a monoid:
\begin{proposition}\label{leftinv}
	Let $(A,\alpha )$ be an $I$-set such that the right action $\alpha_r$ is reversible, then there is a isomorphism $\rMap (A)\cong  A$ as $I$-sets. If  the left action $\alpha_l$ is reversible, then there is a isomorphism $\lMap (A)\cong  A$ as $I$-sets. 
\end{proposition}

\begin{proof}
	Assume $\alpha_r$ is reversible . Define a map $\phi:A\to \rMap$	such that $\phi(a)=f_a$ for $a\in A$  where $$f_a(i)=\alpha_l(i)(a)\alpha_r(i)^{-1}.$$ This map is well defined since $$f_a(i\otimes j)\alpha_r(i)=\alpha_l(i\otimes j)(a)\alpha_r(i\otimes j)^{-1}\alpha_r(i)= \alpha_l(i)(f_a( j)),$$
	$f_a$ is in $\rMap$. Since $f_a(1)=a$, $\phi$ is the inverse of the $\evr$, so that $\evr$ is a bijection and by Proposition \ref{isom} $\phi$ is equivariant, so that we get an isomorphism of $I$-sets. The proof for the case $\alpha_l$ is reversible is the same.
\end{proof}

\subsection{As idempotent endofunctors on $\bACT(I)$}

Let $I$ be a monoid. The following lemma shows that the reversing functors are idempotent.
\begin{theorem}\label{idempotent}
	The evaluations function $\evl$ (resp. $\evr$) defines a natural isomorphism from $\lMap\circ \lMap$ to $\lMap$ (resp.  from $\rMap\circ \rMap$ to $\rMap$). 
\end{theorem}
\begin{proof}
	For any $I$-set $A$, consider the function $$\Phi_A:\lMap(A)\to \lMap\circ \lMap(A)$$ given by $$\Phi(g)(i)(j)=g(i\otimes j)$$ for $g\in \lMap(A)$ and $i,j\in I$. It is straightforward to check the equalities  
	$$\alpha_l(k)(\Phi(g)(i)(j\otimes k))=(\Phi_A(g)(i)(j))\alpha_l(k)$$ and $$\Phi(g)(i\otimes k)=(\Phi(g)(i))\theta_r(k)$$ 
	so that $\Phi$ is well defined. Since $$g(k\otimes i\otimes j) =\Phi((g)\theta_r(k))(i)(j)=(\Phi(g))\theta_r(k)(i)(j)=g(k\otimes i\otimes j),$$
 $\Phi$ is equivariant. For  any  $g\in \lMap(A)$ we have
	$$(\evl\circ\Phi)(g)(i)=\Phi(g)(i)(1)=g(i)$$ and for any $h \in \lMap\circ \lMap(A)$ we have $$(\Phi\circ \evl)(h)(i)(j) =\Phi(h(1))(i)(j) =h(1)(i\otimes j)=h(i)(j)$$ 
	so that $\evl$ and $\Phi$ are mutual inverses. This completes the proof. The same proof works for $\evr$ as well.
\end{proof}

We denote the composition of two reverse endo-functors on $\bACT(I)$ by $\InvE$, in other words we have
$$\InvE=\lMap \circ \rMap$$
considered as an endofunctor on $\bACT(I)$. An equivariant function $f$ in $\InvE (A)$ satisfies $$f(i\otimes j)(i\otimes k)=f(j)(k)$$ for every $i, j$ and $k$ in $I$. For any $I$-set $A$ we have an evaluation function
$$\ev :\InvE(A)\rightarrow A$$ 
defined by  $\ev (f)=f(1)(1)$. If $\gamma$ is the inverse of the inverse action on $A$, i.e. action on $\rMap (\lMap (A))$, then we have $$(\ev(\gamma_l(i)(f)))\alpha_r(i)=(f\circ \mu_r(i)(1)(1))\alpha_r(i)=(f (i)(1))\alpha_r(i)$$by equivariance of $f(i)$ this is equal to  $$\alpha_l(i)(f(i)(i))=\alpha_l(i)(f(1)(1))=\alpha_l(i)(\ev(f))$$
hence, $\ev$ is equivariant. Then  $\ev$ defines a natural transformation from $\InvE\circ \InvE$ to $\InvE$. When $I$ is a commutative monoid, we have the following proposition:
\begin{proposition} 
	If $I$ is a commutative monoid then $\ev$ defines a natural isomorphism from $\InvE\circ \InvE$ to $\InvE$. 
\end{proposition}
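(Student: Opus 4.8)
The plan is to produce an explicit two-sided inverse to the evaluation map $\ev_{\InvE(M)}\colon \InvE\InvE(M)\to \InvE(M)$, in the same spirit as the map $\Phi$ constructed in Proposition~\ref{idempotent}.

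First I would unwind the two objects concretely. Writing $\InvE(M)=\lMap(\rMap(M))$, an element is a function $h$ with values $h(i)(j)\in M$ for $i,j\in I$, subject to the two constraints coming from the outer $\lMap$ and from $h(i)\in\rMap(M)$, namely $h(i)(j)=h(i\otimes k)(j\otimes k)$ and $(h(i)(k\otimes j))\alpha_r(k)=\alpha_l(k)(h(i)(j))$. Iterating, an element of $\InvE\InvE(M)=\lMap(\rMap(\InvE(M)))$ is a function $H$ with values $H(x)(m)(u)(v)\in M$, where $(x,m)$ is the outer pair of indices and $(u,v)$ the inner pair. Unwinding the $\lMap$- and $\rMap$-conditions at both levels yields four relations: the outer pair may be shifted together on the right; the middle and inner-left indices may share a common left factor; the inner pair may be shifted together on the right; and one action relation involves the inner-right index. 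Under this description $\ev_{\InvE(M)}(H)=H(1)(1)$.

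Next I would define $\Psi\colon\InvE(M)\to\InvE\InvE(M)$ by $\Psi(h)(x)(m)(u)(v)=h(x\otimes u)(m\otimes v)$ and carry out four checks: (i) $\Psi(h)$ satisfies the four defining relations of $\InvE\InvE(M)$; (ii) $\Psi$ is $I$-equivariant; (iii) $\ev\circ\Psi=\id$; and (iv) $\Psi\circ\ev=\id$. Checks (i), (ii) and (iii) are short. For (i), each of the four relations for $\Psi(h)$ becomes, after reordering the relevant products (which is exactly where commutativity is used), one of the two constraints on $h$. For (ii), since both objects carry $\lMap$-type actions, with trivial left component and right component given by left multiplication on the leading index, the equivariance of $\Psi$ reduces to associativity. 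For (iii), $\ev(\Psi(h))(u)(v)=\Psi(h)(1)(1)(u)(v)=h(u)(v)$ is immediate.

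The main obstacle is check (iv), the identity $H(x)(m)(u)(v)=H(1)(1)(x\otimes u)(m\otimes v)$. Here the naive idea of lowering the outer indices $x,m$ to the identity one at a time fails, since a monoid has no inverses and no single relation decreases an index; this is precisely the subtlety that makes commutativity essential. The correct argument is to observe that the four relations preserve a single difference parameter, namely the class of $x\otimes u$ against $m\otimes v$ in the Gr\"othendieck group of $I$, and then to exhibit a common tuple reachable from both $(x,m,u,v)$ and $(1,1,x\otimes u,m\otimes v)$ by applying the relations in the increasing direction. Concretely I would apply the middle/inner-left relation and then the inner-pair relation to the first tuple, and the outer-pair relation followed by the middle/inner-left relation to the second, and check that both land on the tuple $(x,\ x\otimes m,\ x\otimes m\otimes u,\ m\otimes v)$; commutativity is exactly what forces these two targets to coincide, which establishes (iv). Finally, naturality in $M$ is automatic: $\ev$ is already known to be a natural transformation and $\Psi$ is its pointwise inverse, so $\Psi$ is natural as well and $\ev$ is therefore a natural isomorphism.
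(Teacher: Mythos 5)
Your proof is correct and takes essentially the same route as the paper: the paper's own proof constructs exactly your map, namely $\Phi_X(g)(i)(j)(k)(l)=g(i\otimes k)(j\otimes l)$, and verifies the same identities. The only difference is one of completeness, not of method: your two-path verification of $\Psi\circ\ev=\id$ (your step (iv), where both tuples are driven to $(x,\,x\otimes m,\,x\otimes m\otimes u,\,m\otimes v)$ using commutativity) supplies the detail for the equality $\Phi_X(\ev(X)(g))=g$, which the paper merely asserts.
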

\begin{proof}
	For any $I$-set $A$, the function $$\Phi_A:\InvE(A)\to \InvE\circ \InvE(A)$$ given by $$\Phi_A(g)(i)(j)(k)(l)=g(i\otimes k)(j\otimes l)$$ for $g\in \InvE(A)$ and $i,j,k,l \in I$. It is straightforward to check that this function is equivariant since on both $\InvE(A)$ and $\InvE\circ \InvE(A)$, the right actions are trivial. We have  $$\ev (\Phi_A(g))(k)(l) =g(k)(l)$$ and $$\Phi_A(\ev (g))(k)(l) =g(k)(l)$$ so that $\ev $ and $\Phi_A$ are mutual inverses. This completes the proof.
\end{proof}

\subsection{Reverse actions on finite sets}\label{Section:invfinite}
We again use the same notations for the restrictions of  $\lMap$, $\rMap$ and their compositions $\InvE$ on $\bact(I)$. Let $(A,\alpha )$ be an $I$-set such that the right action $\alpha_r$ is trivial. For an element $a$ in $A$, let  $Ia$ denote the orbit set $$Ia=\{ \alpha_l(i)(a):i\in I\}$$ and for a given $f:I\to A$ in $\lMap(A)$ let $If(I)$ denote the set $$If(I)=\{ \alpha_l(i)(f(j)):i,\ j\in I\}.$$
We define a set $ A^l$ as the set
$$ A^l=\{a\in A:\ \text{for}\ \text{all}\ i \in I,\ \alpha_l(i)|_{Ia} \text{ is one-to-one}\}.$$
Note that 
\begin{lemma}\label{invfinite}
	Let $I$ be a monoid and let $A$ be a finite set. Let $(A,\alpha )$ be an $I$-set such that the right action $\alpha_r$ is trivial. Then there is a isomorphism $\lMap (A)\cong  A^l$ as $I$-sets.
\end{lemma}
\begin{proof}
	Firstly, for an element $a \in  A^l$ we define $f_a:I\to A$ with $f_a(i)=\alpha_l(i)^{-1}(a)$, then since $a\in  A^l$, this is a well-defined map. By definition for every $i,j$ in $I$ we have	
	$$\alpha_l(i)f_a(j\otimes i)=\alpha_l(i)\alpha_l(j \otimes i)^{-1}(a)=\alpha_l(j)^{-1}(a)=f_a(j).$$ Hence $f_a$ is equivariant and we have an injective function $ A^l\to \lMap (A)$.
	
	Now suppose that $f:I\to A$ be a function in $\lMap (A)$. 
	We claim that $f(1)$ is an element of $ A^l$. Assume the contrary that there exist $i,j, k$ in $I$ such that $$\alpha_l(j)(f(1))\neq \alpha_l(k)(f(1))\ \text{ and }\  \alpha_l(i\otimes j)(f(1))= \alpha_l(i\otimes k)(f(1)).$$ Since  $A$ is finite then for every  $i\in I$ there exist positive integers $m, m'$ with $m>m'$ such that for all $a$ in $If(I)$, we have the identity $\alpha_l(i^m)(a)=\alpha_l(i^{m'})(a)$. Hence the restriction of $\alpha_l(i^{m-m'})$ to the set $$\alpha_l(i^{m'})(If(I)):=\{\alpha_l(i^{m'})(a):a\in If(I)\} $$ is the identity function. Moreover,  for any $v\in I$ we have
	$$f(v)=\alpha_l(i^{m'})f(v\otimes i^{m'} )$$
	so that $im(f)$ is contained in $\alpha_l(i^{m'})(If(I)).$ 
	
	Let $j$ and $k$ be two elements in $I$. As above there are integers $t,t'$ with $t>t'$ and   $$\alpha_l(j^{t'})f(j^{t})=\alpha_l(j^{t'})f(j^{t'})  $$ so that   $\alpha_l(j)(f(1))=f(j^{t-t'-1})$.  Similarly there are integers $s,s'$ with $s>s'$ and   $ \alpha_l(k)(f(1))=f(k^{s-s'-1}). $ Hence both  $ \alpha_l(k)(f(1)) $ and  $ \alpha_l(k)(f(1)) $ are elements of $im(f)$, which means $\alpha_l(i^{m-m'})$ is identity on both.
	
	By our initial assumption we have
	$$ \alpha_l(i^{m-m'-1})(\alpha_l(i\otimes j)(f(1)))=\alpha_l(i^{m-m'-1})(\alpha_l(i\otimes k)(f(1)))$$
	which implies	
	$$ \alpha_l(i^{m-m' })(\alpha_l(  j)(f(1)))=\alpha_l(i^{m-m' })(\alpha_l(  k)(f(1)))$$
	As a result we get  $ \alpha_l(  j)(f(1))= \alpha_l(  k)(f(1))$, i.e. a contradiction, so that $f(1)$ must be an element of $ A^l$. The evaluation function $\evl$ is injective by definition of  $A^l$ and $\evl (f_a)=a$. By Proposition \ref{isom}  we  get an isomorphism as desired. This completes the proof.
\end{proof}
Objects in $\bact(I)$ are the actions with either left or right component is reversible. Assume $A$ is an $I$-set with right component is reversible. Then we define $A^l$ as  $\rMap A^l$. We have the following lemma:
\begin{lemma}\label{invfinitebact}
 There is an isomorphism $\lMap (A)\cong  A^l$ as $I$-sets.
\end{lemma}
\begin{proof}
The proof follows from Lemma \ref{invfinite} and Proposition \ref{leftinv}.
\end{proof}

For an $I$-set $A$ we define
$ A^r $ similarly. We have a similar lemma as follows:
\begin{lemma}\label{invfiniter}
Let $(A,\alpha )$ be an $I$-set such that the left action $\alpha_l$ is reversible.	Then there is an isomorphism $\rMap (A)\cong  A^r$.
\end{lemma}  
Let $\bev$ denote the restriction of $\ev$ on finite $I$-sets. Note that $\bev$ is bijective by the previous propositions. We have the following lemma:
\begin{proposition} \label{isominve}
$\bev$ defines a natural isomorphism from $\InvE\circ \InvE$ to $\InvE$. 
\end{proposition}
\begin{proof}

	This proposition directly follows from Proposition \ref{isom}, since $\bev$ from $\InvE\circ \InvE$ to $\InvE$ is bijective, by the Lemma \ref{invfinite} and Proposition \ref{leftinv}.
\end{proof}

\section{Equivalence of view points on groups}

The following theorem shows that Definition \ref{definition:ActionsOfsemigroups} is equivalent to the usual one for groups.
\begin{theorem} For a group $G$, the categories $\act(G)$, $\bact(G)$, $\act_l(G)$ and $\act_r(G)$  are all equivalent to each other as categories and $\ACT(G)$, $\bACT(G)$, $\ACT_l(G)$, $\ACT_r(G)$ are all equivalent to each other as categories.
\end{theorem}
\begin{proof}
	Here we will only prove the equivalence of $\ACT(G)$ and $\ACT_l(G)$ the rest is either similar or just obtained by restrictions of the equivalences. First note that the functor $$\rMap:\ACT(G)\to \ACT(G)$$ factors through the inclusion $$inc: \ACT_l(G)\to\ACT(G).$$
	We again write $$\rMap:\ACT(G)\to \ACT_l(G)$$  for the functor in the factorization, by an abuse of notation. Then this functor sends an object $(A,\alpha)$ in $\ACT(G)$ to the left action $\mu :G\rightarrow \End_l(A)$ given by
	$$\mu(g)(a)= \alpha_l(g)((a)\alpha_r(g^{-1}))$$
	and sends a morphism $f$ from $(A,\alpha)$ to $(B,\beta)$ to itself considered as a function from $A$ to $B$. Now clearly $\rMap\circ inc$ is identity on $\ACT_l(G)$. By Proposition \ref{evl} and \ref{leftinv}, $\evr$ defines a natural isomorphism from $inc\circ \rMap$ to the identity on $ACT(G)$. Hence, this gives an equivalence between $\ACT(G)$  and $\ACT_l(G)$.
\end{proof}

We define a functor $$\operatorname{inv}_l^r:\ACT_l (G)\rightarrow \ACT_r (G) $$ which sends a left $G$ action $$\nu:G\times A\rightarrow A\text{, given by }(g,a)\mapsto g.a $$
for $g\in G$ and $a\in A$, to a right $G$-action
$$\nu^{-1}:A\times G\rightarrow A\text{, given by }(a,g)\mapsto g^{-1}.a$$ for $g\in G$ and $a\in A$, i.e. the reverse action of $\nu$. The following theorem shows that the two definitions we gave for reverse actions agree for group actions.

\begin{theorem}\label{l2r}
	The diagram
	\[\begin{tikzpicture}
	\node (0) at (1,0) {$\ACT(G)$};
	\node (c) at (4,0) {$\ACT(G)$};
	\node (a) at (1,1.5) {$\ACT_l(G)$};
	\node (b) at (4,1.5) {$\ACT_r(G)$};
	\path[->,font=\scriptsize,>=angle 90]
	(a) edge node[above]  {$\operatorname{inv}_l^r$} (b)
	(b) edge node[right] {$inc$} (c)
	(a) edge node[left] {$inc$} (0)
	(0) edge node[above] {$\lMap $} (c);
	\end{tikzpicture}\]
	is commutative up to a natural isomorphism.
\end{theorem}
\begin{proof}
This follows from Proposition \ref{leftinv}, since group actions are reversible on both sides.
\end{proof}

A version of Theorem \ref{l2r} is also true for the case of reversing actions from right to left, i.e. the diagram
\[\begin{tikzpicture}
\node (0) at (1,0) {$\ACT(G)$};
\node (c) at (4,0) {$\ACT(G)$};
\node (a) at (1,1.5) {$\ACT_r(G)$};
\node (b) at (4,1.5) {$\ACT_l(G)$};
\path[->,font=\scriptsize,>=angle 90]
(a) edge node[above]  {$\operatorname{inv}_r^l$} (b)
(b) edge node[right] {$inc$} (c)
(a) edge node[left] {$inc$} (0)
(0) edge node[above] {$\rMap $} (c);
\end{tikzpicture}\]
is commutative up to a natural isomorphism, where $\operatorname{inv}_r^l$ is defined similarly.

\section{Homotopy category of monoid actions and the Burnside ring}\label{homotoical}
In this section we discuss homotopical category structure on $\bACT(I)$ where $I$ is a monoid. We refer to \cite{dwyer} for general terminology and homotopical notions in this section. Let $A, B$ be $I$-sets in $\bACT(I)$ and $f:A\to B$ be an $I$-equivariant map. We say $f$ is a weak equivalence if the induced function $\InvE(f):\InvE (A)\to \InvE (B)$ is an isomorphism. We denote the class of weak equivalences by $\WW$. It is straightforward to check that these weak equivalences satisfy the $2$-out-of-$6$ property, since isomorphisms do. Hence this makes  $\bACT(I)$ into a homotopical category. The homotopical structure on the subcategories of $\bACT(I)$ is defined accordingly. 

In order to define the Burnside ring of a monoid $I$ we concentrate on the actions of $I$ on finite sets. Note that the functor $$\InvE:\bact(I)\to \bact (I)$$ factors through the inclusion $$inc: \act_l(I)\to\bact(I).$$ We again denote the functor $\bact(I)\to \act_l(I)$ in  the factorization by $\InvE$, by an abuse of notation. Note that the functor $$\InvE:\bact(I)\to \act_l(I)$$ preserve weak equivalences so does the inclusion $$inc: \act_l(I)\to \bact(I).$$ The composition $\InvE\circ inc$ is identity functor on $ \act_l(I)$ and there is a natural weak equivalence from $inc\circ \InvE$ to $id_{ \bact(I)}$ given by the evaluation map $\bev$. Hence $\act_l(I)$ is a left deformation retract of $\bact(I)$, so that their homotopy categories are naturally equivalent (see \cite{dwyer}, 26.3, 26.5 and 29.1). We will continue with the category  $\act_l(I)$ to define the Burnside ring. The category $\act_l(I)$ has nice properties such as monomorphisms are stable under pushouts and epimorphisms are stable under pullbacks \cite{topos}, as it is a topos, so that isomorphisms are also stable under pullbacks and pushouts. In fact assume we have a diagram 
\[\begin{tikzpicture}[scale=.8]
\node (0) at (1,0) {$A$};
\node (c) at (3,0) {$C$};
\node (a) at (1,2) {$D$};
\node (b) at (3,2) {$B$};
\path[->,font=\scriptsize,>=angle 90]
(a) edge node[above]  {$f'$} (b)
(b) edge node[right] {$p$} (c)
(a) edge node[left] {$\imath$} (0)
(0) edge node[above] {$f$} (c);
\end{tikzpicture}\]
Pullbacks and pushouts are given in a standard way. If $D$ is the pullback of the maps $p$ and $f$ where $\alpha$ , $\beta$ and $\gamma$ are the actions on $A$, $B$ and $C$ respectively, then $D$ is given as the set $$D=\{(a,b)\in A\times B: f(a)=p(b)\}$$ and the action $\delta$ on $D$ is given by pair of actions, i.e. $\delta_l=(\alpha_l,\beta_l)$ and trivial right action. The maps $\imath$ and $f'$ are induced by projections so that they are equivariant.

If the above square is a pushout then $$C=(A\amalg B)/\sim $$ where  $\imath(d)\sim f'(d)$ for all $d$ in $D$. The action $\gamma$ on $C$ is given by 
	$$
	\gamma_l(i)[x]=
	\left\{ \begin{array}{cc}
 \alpha_l(i)(x)  &\text{ if }x\in A \\
	\beta_l(i)(x) &\text{ if }x\in B \\
	\end{array} \right.
	$$
for all $i\in I$, with trivial right action. By equivariance of the maps $\imath$ and $f'$ in diagram, so that for all $d\in D$ and $i\in I$ we have $ \alpha_l(i)(\imath(d)) =  \imath(\delta_l(i)(d))$ and $\beta_l(i)(f'(d))= f'(\delta_l(i)(d))$, so that $\alpha_l(i)(\imath(d)) \sim \beta_l(i)(f'(d))$, i.e. the action is well defined. The maps $p$ and $f$ are induced by inclusions so that they are also equivariant. 

We will show that the category $\act_l(I)$ admits a $3$-arrow calculus, for details of $3$-arrow calculus we refer to \cite{dwyer}, 27.3.

\subsection{Saturation of the category $\act_l(I)$}
Let us denote the homotopy category of  $\act_l(I)$  by  $\Ho(\act_l(I))$ and let $L:\act_l(I)\to \Ho(\act_l(I))$ be the localization with respect to the above weak equivalences (see \cite{dwyer} 26.5). We will show that $\act_l(I)$ admits a $3$-arrow calculus. To do this we define two subclasses $\UU$ and $\VV$ of the class weak equivalences $\WW$ of  $\act_l(I)$ as follows: $\UU$ will be the subclass  of $\WW$ which are also inclusions and $\VV$ will be the subclass  of $\WW$ which are also surjections. Firstly, suppose that we have a zig-zag $A'\stackrel{u}\leftarrow A\stackrel{f}\to B$ in  $\act_l(I)$ where $u$ is in $\UU$. Then we can associate another zig-zag $A'\stackrel{f'}\to B'\stackrel{u'}\leftarrow  B$ from the pushout
\[\begin{tikzpicture}[scale=.8]
\node (0) at (1,0) {$A'$};
\node (c) at (3,0) {$B'$};
\node (a) at (1,2) {$A$};
\node (b) at (3,2) {$B $};
\path[->,font=\scriptsize,>=angle 90]
(a) edge node[above]  {$f$} (b)
(b) edge node[right] {$u'$} (c)
(a) edge node[left] {$u$} (0)
(0) edge node[above] {$f'$} (c);
\end{tikzpicture}\]
so that $f'\circ u=u'\circ f$ and the function $u'$ is an inclusion. Let $\alpha$, $\alpha'$, $\beta$ and $\beta'$ be the actions on $A$, $A'$, $B$ and $B'$ respectively. Since right actions are trivial, to be able to see $u'$ is weak equivalence, it is enough to show $\lMap(B') $ is contained in the image of  $\lMap(u') $. Assume the contrary and let $\sigma:I\to B'$ be a map in  $\lMap(B') $ which is not in the image of  $\lMap(u') $. Then $\sigma(1)$ is not in the image of $u'$ because otherwise $\sigma$ factors through $u'$ since $\sigma(1)\in (B')^l$ (see Lemma \ref{invfinite}), so that  $\sigma(1)$ is in the image of $f'$. Thus, there is an element $a'$ in $A'$ such that $f'(a')=\sigma(1)$. Assume first $a'\notin (A')^l$ i.e. there exist $i,i_1, i_2$ in $I$ such that $$\alpha'_l(i_1)(a')\neq \alpha'_l(i_2)(a')\ \text{but} \ \alpha'_l(i\otimes i_1)(a')= \alpha'_l(i\otimes i_2)(a')$$
then there exist  $b\in B$ such that $u'(b)=f'(\alpha'_l(i_1)(a'))$. But as in the proof of Lemma \ref{invfinite} there exist an integer $m$ such that $$f'(a')=\alpha'_l(i_1^m)(f'(\alpha'_l(i_1)(a')))=\alpha'_l(i_1^m)(u'(b))=u'(\beta'_l(i_1^m)(b)),$$
but then this leads us a contradiction unless $a'\in (A')^l$, so that $\sigma$ must be an element in the image of $\lMap(f')$. Since $u$ is a weak equivalence, any element in $\lMap(A')$ factors through $u$, so that $\sigma$ is in the image of $\lMap(f'\circ u)$. But then we get a contradiction again since $\sigma$ is not in the image of $\lMap(u'\circ f)$. Hence, $u'$ is a weak equivalence, i.e. $u'$ is in $\UU$. If $u$ is an isomorphism then $u'$ is also an isomorphism since both $u$ and $u'$ fits in above pushout diagram.

Similarly if we have a zig-zag  $X\stackrel{g}\to Y\stackrel{v}\leftarrow  Y'$  in   $\act_l(I)$ where $v$ is in $\VV$, then we can associate another zig-zag $X\stackrel{v'}\leftarrow X'\stackrel{g'}\to Y$ from the pullback diagram
\[\begin{tikzpicture}[scale=.8]
\node (0) at (1,0) {$X$};
\node (c) at (3,0) {$Y$};
\node (a) at (1,2) {$X'$};
\node (b) at (3,2) {$Y'$};
\path[->,font=\scriptsize,>=angle 90]
(a) edge node[above]  {$g'$} (b)
(b) edge node[right] {$v$} (c)
(a) edge node[left] {$v'$} (0)
(0) edge node[above] {$g$} (c);
\end{tikzpicture}\]
so that $g\circ v'=v\circ g'$, and the function $v'$ is a surjection. Let $\sigma:I\to X'$ , $\bar{\sigma}:I\to X'$ elements in $\lMap(X') $ with $\sigma(i)=(x_i,y_i)$ and $\bar{\sigma}(i)=(\bar{x}_i,y_i)$ for $i\in I$ $x_i,\bar{x}_i \in X$ and $y\in Y'$, i.e. $\lMap(v')({\sigma})=\lMap(v')(\bar{\sigma})$. Since $\lMap(g')({\sigma})(i)=x_i$ and $\lMap(g')(\bar{\sigma})(i)=\bar{x}_i$, we have $\lMap(v)(x_i)=\lMap(v')(y_i) =\lMap(v)(\bar{x}_i)$. We know $v$ is a weak equivalence so that $\lMap(v)$ is bijection, thus $x_i=\bar{x}_i$, i.e. $v'$ is a weak equivalence. Hence $v'$ is in $\VV$. Again if $v$ is an isomorphism then so does $v'$ since both fits into a pullback diagram.

Assume now $w:M\to N$ is a weak equivalence in  $\act_l(I)$, then consider the pushout diagram
\[\begin{tikzpicture}[scale=.8]
\node (0) at (1,0) {$N$};
\node (c) at (4.5,0) {$M'$};
\node (a) at (1,2) {$\InvE(M) $};
\node (b) at (4.5,2) {$M$};
\path[->,font=\scriptsize,>=angle 90]
(a) edge node[above]  {$\bev$} (b)
(b) edge node[right] {$u$} (c)
(a) edge node[left] {$w\circ \bev$} (0)
(0) edge node[above] {$\tilde u$} (c);
\end{tikzpicture}\]
 From the Lemmas \ref{invfinite} and \ref{invfiniter} we know $\bev$ is injective. Since the above square is a pushout, $\tilde u$ is injective. Hence, there is a unique function $v:M'\to N$ which is surjective. As before, the functions $u$ and $v$ are also equivariant, so that we have a factorization of $w$ as $w=v\circ u$ such that $v$ is in $\VV$ and $u$ is in $\UU$. Hence $\act_l(I)$ admits a $3$-arrow calculus $\{\UU,\VV\}$. Then by 27.5 of \cite{dwyer} we can conclude that $\act_l(I)$ is saturated, i.e. a function in $\act_l(I)$ is a weak equivalence if and only if its image in $\Ho(\act_l(I))$, under the localization functor, is an isomorphism.
 
Note that it is possible to define stronger classes of weak equivalences on these categories which still make them homotopical categories, by using similar ideas above along with restrictions of actions to submonoids or subsets. However, not all of them admit a $3$-arrow calculus. For a given a submonoid $J$ of $I$ let  $\Res_J^I:\act_l(I)\to \act_l(J)$ be the restriction functor, which sends a finite $I$-set $(A,\alpha)$ to the $J$-set $A$ with the restriction of the action $\alpha$ on $J$. Let $Z$ be a collection of submonoids of $I$ which contains $I$. A function $f:A\to B$ in $\act_l(I)$ is called a $Z$-equivalence if for every $J$ in $Z$ the function $\InvE(\Res_J^I(f) )$ is an $I$-equivariant isomorphism. Since $\Res_J^I$ respects compositions, the class of $Z$-equivalences satisfies both $2$-out-of-$3$ and $2$-out-of-$6$ properties, and so that again $\act_l(I)$ with $Z$-equivalences will be a homotopical category admitting a $3$-arrow calculus, when we set $\UU$ as the subclass of $Z$-equivalences which are inclusions and $\VV$ as the subclass of $Z$-equivalences which are surjections. It is now straightforward to check that these classes satisfied the required axioms. A $Z$-equivalence is trivially a weak equivalence so that $Z$-equivalences are stronger form of weak equivalences. This is a possible direction to look but it is too complicated. However, in this paper we continue with the weak equivalences instead of $Z$-equivalences for convenience.

\subsection{Burnside ring}
In the classical theory of group actions, when a group $G$ is given, the Burnside ring of $G$, denoted by $A(G)$, is defined as the Gr\"othendieck ring of the semiring of isomorphism classes of finite $G$-sets where the addition is given by disjoint union and the multiplication is given by cartesian product. The Burnside ring of a group is a very important construction in group theory, and has several applications, see e.g. \cite{tomdieck}, \cite{dress}, \cite{dress2}, \cite{dress3}.  We define the Burnside ring of a monoid by using the homotopical structure on $\act_l(I)$. The isomorphism classes in $\Ho(\act_l(I))$ forms a semiring under disjoint union as addition and cartesian product as multiplication. We call the Gr\"othendieck ring of this semiring as the Burnside ring of $I$, and we denote this ring by $\A(I)$. Most of the properties of this Burnside ring follows from Section \ref{Section:invfinite}.

By definition the Burnside ring of a group given in this way is equal to the standard one. Hence, it does validate the name ``the Burnside ring of a monoid".  Moreover, the following proposition shows that the definitions of the Burnside ring of a commutative monoid is same as the Burnside ring of its Gr\"othendieck construction. Let us denote by $K(I)$ the Gr\"othendieck group of a commutative monoid $I$. Then $\A(K(I))$ denotes the usual Burnside ring of the group  $ K(I) $ (see e.g. \cite{tomdieck}). 
\begin{theorem}\label{burnside}
If $I$ is commutative monoid then $\A(I)$ is isomorphic to $\A(K(I))$.
\end{theorem}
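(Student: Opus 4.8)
The plan is to identify the semiring of isomorphism classes in $\Ho(\bAct(I))$ with the semiring of isomorphism classes of finite $K(I)$-sets, compatibly with disjoint union and cartesian product, and then to pass to Gr\"othendieck rings. Throughout I use that $\A(I)$ and $\A(K(I))$ are by definition the Gr\"othendieck rings of these two semirings, and that for the group $K(I)$ the equivalences of Section \ref{subsection:EquivalenceOfViewPoints} identify $\bAct(K(I))$ with finite $K(I)$-sets, so that $\A(K(I))$ is the classical Burnside ring.

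First I would reduce isomorphism classes in $\Ho(\bAct(I))$ to isomorphism classes of objects in the image of $\InvE$. Since $\ev$ is a weak equivalence for every object $M$ (its image under $\InvE$ is the isomorphism $\overline{\ev}$), each $M$ is isomorphic to $\InvE(M)$ in $\Ho(\bAct(I))$. Conversely, applying $\InvE$ to a zig-zag of weak equivalences yields a zig-zag of isomorphisms, so $M\cong N$ in $\Ho(\bAct(I))$ forces $\InvE(M)\cong \InvE(N)$ as $I$-sets; here the saturation established in Section \ref{homotoical} is exactly what guarantees that no further identifications occur. Hence $[M]\mapsto[\InvE(M)]$ is a well-defined bijection from isomorphism classes in $\Ho(\bAct(I))$ onto isomorphism classes, as $I$-sets, of objects of the form $\InvE(M)$.

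Second, and this is the heart of the matter, I would show that these objects are precisely the finite $K(I)$-sets. Using Propositions \ref{invfinite} and \ref{invfiniter} I would check that on $\InvE(M)$ the left action is trivial while the right action $\theta_r(k)$ corresponds, on the relevant set of invertible points, to $x\mapsto (x)\alpha_r(k)^{-1}$; in particular every $\theta_r(k)$ is a bijection. Thus $\theta_r$ lands in $\Aut(\InvE(M))$, and because $I$ is commutative the universal property of the Gr\"othendieck group gives a unique factorization $I\to K(I)\to \Aut(\InvE(M))$, endowing $\InvE(M)$ with a finite $K(I)$-set structure. An $I$-equivariant bijection between two such objects automatically commutes with each $\theta_r(k)$, hence with $\theta_r(k)^{-1}$, hence with all of $K(I)$, so it is $K(I)$-equivariant; this makes $[M]\mapsto[\InvE(M)]$ a well-defined injection into isomorphism classes of finite $K(I)$-sets. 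For surjectivity I would restrict a given finite $K(I)$-set along $I\to K(I)$ to obtain an $I$-set $M$ whose action is already invertible, and then verify directly, via the finite-set descriptions (exactly as in the group case where $\lMap\circ\rMap$ is the identity, cf.\ Theorem \ref{l2r}), that $\ev\colon \InvE(M)\to M$ is an isomorphism, so the class of $\InvE(M)$ recovers the given $K(I)$-set.

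Finally I would verify that this bijection respects the semiring structure. Since $\lMap$ and $\rMap$ are of the form $\Map_I(I,-)$ they preserve finite products, so $\InvE(M\times N)\cong \InvE(M)\times\InvE(N)$ carrying the diagonal $K(I)$-action. For disjoint unions the equivariance constraint forces an equivariant map out of $I$ into a single component: evaluating the defining identity at the unit of $I$ expresses each value in terms of $\alpha_l(i)$ or $\alpha_r(i)$ applied to another value, and these preserve components, so $\InvE$ preserves finite coproducts as well. Passing to Gr\"othendieck rings then produces the desired ring isomorphism $\A(I)\cong \A(K(I))$. I expect the main obstacle to be the second step: confirming that the action on $\InvE(M)$ is genuinely invertible, rather than merely trivial on one side, so that the commutative group completion $K(I)$ really acts. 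This is precisely where commutativity of $I$ is indispensable, and where the finite-set computations of Section \ref{Section:invfinite} must be combined with the universal property of $K(I)$.
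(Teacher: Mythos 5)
Your proposal is correct and takes essentially the same route as the paper's proof: both send a class $[M]$ to $[\InvE(M)]$, using that $\ev\colon \InvE(M)\to M$ is a weak equivalence so $[M]=[\InvE(M)]$ in $\A(I)$, that the bijective, trivial-on-one-side action on $\InvE(M)$ factors through $K(I)$, and that restriction along $I\to K(I)$ provides the inverse map $\Lambda$. Your write-up simply makes explicit several points the paper leaves implicit (well-definedness on isomorphism classes in $\Ho(\bAct(I))$ via saturation, surjectivity, and compatibility with disjoint union and cartesian product).
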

\begin{proof}
Define $\widetilde\Lambda:\act_l(K(I))\to \act_l(  I )$ induced  by the natural map from $I$ to $K(I)$ and let $\Lambda:\A(K(I))\to   \A(I)$ denote the induced function on Burnside rings. Here we will define the inverse of $\Lambda$. Let $A$ be an $I$-set with action $\alpha$ and let $\vartheta$ be the action on $\InvE(A)$.  Lemma \ref{invfinite} implies that the action on $\InvE(A)$ has a group action factorization, i.e. the map  $ \vartheta_l:I\to \End_l(\InvE(A))$ factors through the inclusion $ \Aut_l(\InvE(A))\hookrightarrow \End_l(\InvE(A))$. Hence, we can consider $\InvE(A)$ as a $K(I)$-set. Define a function  $\Gamma:\A(I)\to \A(K(I))$ by sending a class $[A]$ of $I$-set $A$ in $\A(I)$ to the class $[\InvE(A)]$ in $ \A(K(I))$. Notice that  $\InvE(\widetilde\Lambda(A))\cong\widetilde\Lambda(A)$ by Proposition \ref{leftinv}, so that $\Gamma\circ \Lambda$ is identity. The composition  $\Lambda\circ \Gamma$ is also identity since by Proposition \ref{isominve}, the natural transformation $\bev$ gives a weak equivalence from $ \InvE(A)$ to $A$. Hence $\Gamma$ is a ring isomorphism with the inverse $\Lambda$. 
\end{proof}

\begin{remark}
	Note that one can also define Burnside ring with $Z$-equivalences on $\act_l(I)$ defined in the previous section, which again will coincide with the definition of Burnside ring of a group. However, in this case for an arbitrary monoid the Burnside ring would be much bigger and would have a very complicated structure, so that the classification problems would become very difficult. 
\end{remark}

\subsubsection{Burnside mark homomorphism}
Assume $I$ is a monoid and $A$ is a finite left $I$-set. Let $J$ be a submonoid of $I$. We define the mark $\hat m_J(A)$ of $J$  on $A$ as the number of elements in $\InvE(A)$ that are fixed by every element in $I$, i.e. if $\vartheta$ is the action on $\InvE(A)$ (which is the action obtained by reversing $\alpha$ twice) then  $$\hat m_J(A)=|\Fix_J(\InvE(A))|.$$ In other words, $\hat m_J(A)$ is the number of equivariant functions in $\InvE(A)$ satisfying $ f(i\otimes j)(k)=f(i)(k)$ for every $i$, $k$ in $I$ and $j$ in $J$. This defines a semiring homomorphism $$\hat m_J:\Isom( \Ho(\act_l(I)))\to \bbZ$$ since $$\InvE(A \amalg  B)=\InvE(A)\amalg \InvE(B),$$ so that $\hat m_J(A\amalg B)=\hat m_J(A)+\hat m_J(B) $ and $$\InvE(A \times  B)=\InvE(A)\times \InvE(B)$$
and hence $ \hat m_J(A\times B)=\hat m_J(A). \hat m_J(B) $ same as the usual case. The associated ring homomorphism $$  m_J:\A(I)\to \bbZ$$
is called the mark homomorphism at $J$. Note that when a finite $I$-set $A$ is given, the  image of $ \vartheta_l$ in $\Aut_l(\InvE(A))$ form a subgroup, and let $\vartheta_l(I)$ denote this subgroup. Let $\vartheta_l(J)$ denote the image of the submonoid $J$ under $\vartheta_l $, which is a subgroup of $\vartheta_l(I)$. Then mark of $A$ at $J$ corresponds to the usual mark of $\vartheta_l(I)$ at the subgroup $\vartheta_l(J)$. We call an $I$-set $(A,\alpha)$ in $\act_l(I)$ weakly-transitive, if for every pair of elements $f, g\in \InvE(A)$, there is an elements $i$ in $I$ such that $\vartheta_l(i)(f)=g$. The set $\InvE(A)$ can be expressed as a disjoint union of orbits  $\vartheta_l(I)/\vartheta_l(J_t)$  for some family $ \{J_t:t\in T \} $ of submonoids. Hence isomorphism classes weakly transitive $I$-sets generate the additive group of Burnside ring, same as the classical case, see \cite{tomdieck-rep}. 

Let $J$ and $J'$ be two submonoids of $I$. We say $J$ and $J'$ are weakly conjugate, and we write $J\conj J'$, if for every $I$-set $A$ the subgroup $\vartheta_l(J)$ is conjugate to $\vartheta_l(J')$ in  $\vartheta_l(I)$, where  $\vartheta_l$ is the action on $\InvE(A)$. It is straightforward to check that weak conjugation is an equivalence relation. Let $Y(I)$ be the set of weak conjugacy classes of $I$, i.e. the set of equivalence classes of  `$ \conj  $'. Observe that weakly conjugate submonoid have the same mark, i.e. if $J\conj J'$ then  for any given $I$-set $A$ we have $ m_J(A)= m_{J'}(A) $, which follows from standard group theory facts. Hence one can see the mark homomorphism as a ring homomorphism $$ m : \A(I)\to \bigoplus_{[J]\in Y(I)} \bbZ$$ into the direct sum of integers $\bbZ$,  so that $\displaystyle m=\bigoplus_{[J]\in Y(I)} m_J$.

\begin{theorem}\label{mark}
	The mark homomorphism $m$ is injective.
\end{theorem}
\begin{proof}
Proof follows from ideas of the proof in the standard case (see \cite{tomdieck-rep}, Proposition 1.2.2). Let $A$ be an $I$-set and $x$ be the corresponding element in the Burnside ring, and let $\vartheta$ be the action on $\InvE(A)$. Then, since there is an induced action of the group  $\vartheta_l(I)$, we can write $$x=\sum_{J\in Y(I)}z_t[ \vartheta_l(I)/\vartheta_l(J)] $$ with $z_J \in \bbZ$. Let $K$ be a monoid such that $\vartheta_l(K)$ is the maximal conjugacy class in $\vartheta_l(I)$ with respect to the inclusion, with $z_K\neq 0$. The rest is same as the proof of Proposition  1.2.2 in \cite{tomdieck-rep}. Since $\Fix_{K}( \vartheta_l(I)/\vartheta_l(J))$ is non-empty if and only if $\vartheta_l(K)$ is sub-conjugate to  $\vartheta_l(J)$, we have $m(x) $ is non-zero due to maximality of  $\vartheta_l(K)$. Hence  $m$ is injective.
\end{proof}

\subsection{Attractors of finite state automata} 
It is well known that the notion of attractors and attracting sets play an important role in physics, geometry and in particular in the theory of  dynamical systems, see for example \cite{seibert}, \cite{milnorattr}, \cite{bonifant}. We define analogues notion for finite state automata. We will consider an automaton in the following sense. Let $I$ be a free semigroup on an alphabet and $X$ be an $I$-set with action $(\alpha_l,1)$, i.e. the right action is trivial. Assume both $I$ and $X$ has a topology and the action is continuous. 

\begin{definition}
	An attractor $A$ of this action is a subset of $X$  defined by the following properties: There exists neighborhood of $X$ in $X$ denoted by $B(X)$ called a basin of attraction for $A$ such that for all submonoid $I'$ of $I$  we have
	$$A=\coprod_{s\in S}A_s$$
	for some index set $S$  so that the following holds for every $s$ in $S$:
	\begin{enumerate}
		\item $A_s$ is forward invariant, i.e. $I'A_s\subset A_s$.
		\item For every $b$ in $B(A)$ there is a word $w$ such that
		$\alpha_l(w)(b)$ is in $A_s$.
		\item $A_s$ is a minimal set satisfying the above two properties.
	\end{enumerate}
\end{definition}

This definition can be viewed as a special case of the definition given in \cite{souza}, when we consider attractors as global uniform attractors in the basin of attraction. In this paper we only use discrete topology on sets. We will say that an attractor is periodic if it is invariant up to isomorphism under the endofunctor $\InvE$. Notice that the Burnside ring of $I$ is generated by periodic attractors for finite $I$-sets (which are same as the orbits of the $I$-sets in the image of $\InvE$) with discrete topology. Hence, the Burnside ring can be used to understand types of periodic attractors which is also important for the usual definition of periodic attractors. 

As some simple examples, consider the set with two elements $\{x_0,x_1\}$ with the actions of free monoid with two generators $\{i_0,i_1\}$ as follows

$$
	\begin{tikzpicture}[auto,node distance=2cm]
 \node[state] (x1)      {$x_1$};
 \node[state]         (x2) [right of=x1] {$x_0$};

 \path[->] (x1)  edge [loop left] node {$i_0$} (x1)
 edge  [bend left]            node {$i_1$} (x2)

 (x2) edge [bend left]  node {$i_1$} (x1)
 (x2)
 edge [loop right] node {$i_0$} (x2)
 ;
\end{tikzpicture}
\hspace{.5cm}
\begin{tikzpicture}[auto,node distance=2cm]
  \node[state] (x1)      {$x_1$};
  \node[state]         (x2) [right of=x1] {$x_0$};

  \path[->] (x1)  edge [loop left] node {$i_1$} (x1)
  edge  [bend left]            node {$i_0$} (x2)

  (x2) edge [bend left]  node {$i_1$} (x1)
  (x2)
  edge [loop right] node {$i_0$} (x2)
  ;
\end{tikzpicture}$$
In the first case (on the left side) the reverse action will be isomorphic to itself, and the attractor is the set itself; however, in the second case, on the right, the inverse action is empty so that there is no attractor. If we consider the action on $\{x_0,x_1\}$ with the action of free monoid with one  generator $i$, given by 
$$\begin{tikzpicture}[auto,node distance=2cm]
\node[state] (x1)      {$x_1$};
\node[state]         (x2) [left of=x1] {$x_0$};

\path[->] (x1)  edge [loop right] node {$i$} (x1)
(x2) edge  node {$i$} (x1);
\end{tikzpicture}$$
then the reverse  action will be singleton with trivial action on it, so the attractor in this case is $\{x_1\}$. 

As a last remark we should note that Lemma \ref{invfinite} is not valid in the case when $A$ is infinite. For example let $I\cong\bbN$ with a single generator $i$ and the set $A$ and the action of $I$ be as in the figure below

\[\begin{tikzpicture}[auto,>=stealth',node distance=1.55cm]
\node[state] (x0)      			{$0$};
\node[state] (x1) [above left of=x0] {${a_{1}}$};
\node[state] (y1) [below left of=x0] {${b_{1}}$};
\node[state] (x2) [left  of=x1] {${a_{2}}$};
\node[state] (y2) [left  of=y1] {${b_{2}}$};
\node[state] (x3) [left  of=x2] {${a_{3}}$};
\node[state] (y3) [left  of=y2] {${b_{3}}$};

\node[state] (-1) [right of=x0] {${-1}$};

\node[state] (-2) [right  of=-1] {${-2}$};

\node (vd) [right  of=-2] {$\dots$};
\node (vdx) [left  of=x3] {$\dots$};
\node(vdy) [left  of=y3] {$\dots$};

\path[->] (x1) edge  node {$i$} (x0)
(y1) edge  node {$i$} (x0)
(y2) edge  node {$i$} (y1)
(y3) edge  node {$i$} (y2)
(x2) edge  node {$i$} (x1)
(x3) edge  node {$i$} (x2)
(x0) edge  node {$i$} (-1)
(-1) edge  node {$i$} (-2)
(-2) edge  node {$i$} (vd)

(vdx) edge  node {$i$} (x3)
(vdy) edge  node {$i$} (y3);
\end{tikzpicture}\]
Then the reverse action will be as follows:
\[\begin{tikzpicture}[auto,>=stealth',node distance=1.5cm]
\node[state] (x0)      			{$\sigma_{0}$};
\node[state] (y0) [above  of=x0] {$\delta_{0}$};
\node[state] (x1) [left  of=x0] {$\sigma_{1}$};
\node[state] (y1) [left  of=y0] {$\delta_{1}$};
\node[state] (x2) [left  of=x1] {$\sigma_{2}$};
\node[state] (y2) [left  of=y1] {$\delta_{2}$};
\node[state] (x3) [left  of=x2] {$\sigma_{3}$};
\node[state] (y3) [left  of=y2] {$\delta_{3}$};

\node[state] (-1x) [right of=x0] {$\sigma_{{-1}}$};
\node[state] (-2x) [right  of=-1x] {$\sigma_{{-2}}$};

\node[state] (-1y) [right of=y0] {$\delta_{{-1}}$};
\node[state] (-2y) [right  of=-1y] {$\delta_{{-2}}$};

\node (vx) [right  of=-2x] {$\dots$};
\node (vy) [right  of=-2y] {$\dots$};
\node (vdx) [left  of=x3] {$\dots$};
\node(vdy) [left  of=y3] {$\dots$};

\path[->] (x0) edge  node {$i$} (x1)
(y0) edge  node {$i$} (y1)

(y1) edge  node {$i$} (y2)
(y2) edge  node {$i$} (y3)
(x1) edge  node {$i$} (x2)
(x2) edge  node {$i$} (x3)
(-1y) edge  node {$i$} (y0)
(-2y) edge  node {$i$} (-1y)
(vy) edge  node {$i$} (-2y)

(-1x) edge  node {$i$} (x0)
(-2x) edge  node {$i$} (-1x)
(vx) edge  node {$i$} (-2x)
(x3) edge  node {$i$} (vdx)
(y3) edge  node {$i$} (vdy);
\end{tikzpicture}\]
where the maps $\sigma_{k}$ and $\delta_{k}$ in the inverse action are defined by 
	$$
	\sigma_{{k}}(i^n)=
	\left\{ \begin{array}{cc}
	 a_{n+k}   &\text{ if }n+k>0 \\
	n+k &\text{ if } n+k\leq 0 \\
	\end{array} \right.
	$$
and
	$$
	\delta_{{k}}(i^n)=
	\left\{ \begin{array}{cc}
	b_{n+k}   &\text{ if }n+k>0 \\
	n+k &\text{ if } n+k\leq 0. \\
	\end{array} \right.
	$$


\bibliographystyle{plain}
\bibliography{res2}
\end{document}